\newtheorem{theorem}{Theorem}[section]
\newtheorem{lemma}[theorem]{Lemma}
\newtheorem{corollary}[theorem]{Corollary}
\newtheorem{proposition}[theorem]{Proposition}
\newtheorem{definition}[theorem]{Definition}
\newtheorem{example}[theorem]{Example}
\newenvironment{proof}[1][]{\medskip\noindent {\it Proof{}#1. }}{\hfill$\Box$\par\medskip}
\newcommand{\LY}{{\rm LY}}
\newcommand{\Int}[1]{{\rm Int}\left(#1\right)}
\newcommand{\IR}{{\mathbb R}}
\newcommand{\eps}{\varepsilon}
\newcommand{\CP}{{\mathcal P}}
\begin{document}

\title{Dense chaos for continuous interval maps
\footnotetext{2000 Mathematics Subject Classification. 37E05, 37B40.}
\footnotetext{ Nonlinearity, {\bf 18}, 1691-1698, 2005.}}
\author{Sylvie Ruette}

\date{March 8, 2005.}

\maketitle

\begin{abstract}
A continuous map $f$ from a compact interval $I$ into itself is
densely (resp. generically) chaotic if the set of points $(x,y)$ such that
$\limsup_{n\to+\infty}|f^n(x)-f^n(y)|>0$ and $\liminf_{n\to+\infty}
|f^n(x)-f^n(y)|=0$ is dense (resp. residual) in $I\times I$.
We prove that if the interval map $f$ is densely but not generically
chaotic then there is a descending sequence of invariant intervals, each of
which containing a horseshoe for $f^2$. It implies that every densely
chaotic interval map is of type at most $6$ for Sharkovski\u\i's order
(that is, there exists a periodic point of period $6$), and its topological
entropy is at least $\log 2/2$. We show that equalities can be realised.
\end{abstract}

\section{Introduction}

This article deals with the dynamics of interval maps, that is, continuous
maps $f\colon I\to I$ where $I$ is a compact interval in $\IR$.
We first give some notations used in this paper.
An \emph{invariant} set (for the map $f$) is a closed non empty subset $A$ such that 
$f(A)\subset A$. A \emph{transitive} subset (for $f$) is an invariant set $A$ 
such that $f|_A$ is transitive (see, e.g., \cite{DGS} for the definition 
of transitivity). The length of an interval $J$ is denoted by $|J|$.
An interval $J$ is \emph{non degenerate} if $|J|>0$, that is, $J$ is
neither empty nor reduced to a single point. 

\medskip
In \cite{LY} Li and Yorke called \emph{chaotic} some kind of behaviour
of interval maps, although without formal definition.
The following notions of \emph{Li-Yorke pairs} and \emph{Li-Yorke chaos}
were derived from this article.

\begin{definition}
Let $T\colon X\to X$ be a continuous map on a metric space
$X$.  If $x,y\in X$ and $\delta>0$,  $(x,y)$ is called a {\em Li-Yorke
pair of modulus $\delta$} if
$$
\limsup_{n\to+\infty}d(T^n(x),T^n(y))> \delta\quad\mbox{and}\quad
\liminf_{n\to+\infty}d(T^n(x),T^n(y))=0;
$$
$(x,y)$ is a {\em Li-Yorke pair} if it is a Li-Yorke pair of modulus
$\delta$ for some $\delta>0$.
The set of Li-Yorke pairs of modulus $\delta$ is denoted by
$\LY(T,\delta)$ and the set of Li-Yorke pairs by $\LY(T)$.
\end{definition}

\begin{definition}
Let $T\colon X\to X$ be a continuous map on a metric space
$X$. The system $(X,T)$ is said \emph{chaotic in the sense of Li-Yorke}
if there exists an uncountable set $S\subset X$ such that for all
$x,y\in S, x\not=y$, $(x,y)$ is a Li-Yorke pair.
\end{definition}

The definition of {\em generic chaos} is due to Lasota (see
\cite{Pio}). 
Being inspired by this
definition, Snoha defined {\em generic $\delta$-chaos}, {\em dense
chaos} and {\em dense $\delta$-chaos} \cite{Sno}.

\begin{definition}
Let $T\colon X\to X$ be a continuous map on a metric space
$X$ and $\delta>0$.
\begin{itemize}
\item $T$ is {\em generically chaotic} if $\LY(T)$ is residual in
$X^2$,
\item $T$ is {\em generically $\delta$-chaotic} if $\LY(T,\delta)$  is
residual in $X^2$,
\item $T$ is {\em densely chaotic} if $\LY(T)$ is dense in $X^2$,
\item $T$ is {\em densely $\delta$-chaotic} if $\LY(T,\delta)$  is
dense in $X^2$.
\end{itemize}
\end{definition}

Generic $\delta$-chaos obviously implies both generic chaos and
dense $\delta$-chaos, which in turn imply dense chaos.

In \cite{Sno}, Snoha proved that for an interval map generic chaos
implies generic $\delta$-chaos for some $\delta>0$ and the notions of
generic $\delta$-chaos and dense $\delta$-chaos coincide, but a
densely chaotic interval map may be not generically chaotic.
In \cite{Sno2} Snoha gave a characterisation of densely chaotic
interval maps and proved that for piecewise monotone interval maps
the notions of dense chaos and generic chaos coincide. He 
asked what the infimum of topological entropy and the type for
Sharkovski\u\i's order are for densely chaotic interval maps. 
We recall Sharkovski\u\i's theorem (see \cite{Ste}):

\begin{theorem}[Sharkovski\u\i \cite{Sha}]\label{theo:Sharkovskii}
Consider the following order:
$$
3\lhd 5\lhd 7\lhd 9\lhd\cdots \lhd 2\cdot 3 \lhd 2\cdot 5\lhd
2\cdot 7\lhd\cdots \lhd 2^2\cdot 3\lhd  2^3\cdot 5\lhd\cdots\lhd
2^{n}\lhd\cdots 2^3\lhd2^2\lhd2\lhd 1
$$ 
If the interval map $f$ has a periodic point of period $n$ then
it has periodic points of period $m$ for all $m\rhd n$.
\end{theorem}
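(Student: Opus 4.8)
The plan is to give the classical proof, via the combinatorics of Markov graphs of periodic orbits, using nothing beyond the intermediate value theorem. For closed subintervals $A,B\subseteq I$ write $A\to B$ when $B\subseteq f(A)$. Two elementary facts carry the argument. (i) If $A\to A$, then $f$ has a fixed point in $A$: picking $a,b\in A$ with $f(a)=\min A$, $f(b)=\max A$, the map $x\mapsto f(x)-x$ is $\le 0$ at $a$ and $\ge 0$ at $b$, so it vanishes between them; taking $A=I$ shows every interval map has a fixed point, so period $1$ always occurs. (ii) (\emph{Loop lemma}) Given a cyclic chain $A_0\to A_1\to\cdots\to A_{k-1}\to A_0$ of closed intervals, there is $x\in A_0$ with $f^i(x)\in A_i$ for $0\le i\le k$ and $f^k(x)=x$: pull the constraints back one step at a time ($f(A_{k-1})\supseteq A_0$ yields a closed subinterval of $A_{k-1}$ mapped onto $A_0$, then a subinterval of $A_{k-2}$ mapped onto that, etc.), obtaining nested subintervals of $A_0$ on which $f^k$ maps onto $A_0$, and apply (i) to $f^k$. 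To make $x$ have period \emph{exactly} $k$ one arranges in addition that the chain is not a repetition of a shorter one and that the orbit following it avoids the given finite periodic orbit; I will check such conditions whenever a loop is extracted.

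\textbf{The Markov graph of a periodic orbit.} Let $P=\{p_1<p_2<\cdots<p_n\}$ be a periodic orbit of period $n\ge 2$, and put $I_j=[p_j,p_{j+1}]$ for $1\le j\le n-1$. Since $f|_P$ is a cyclic permutation, $f(I_j)$ contains $[\min(f(p_j),f(p_{j+1})),\,\max(f(p_j),f(p_{j+1}))]$, a union of consecutive $I_i$'s whose index set is read off combinatorially from that permutation; I record this by the directed graph $G(P)$ on vertices $I_1,\dots,I_{n-1}$ with an edge $I_i\to I_j$ whenever $f(I_i)\supseteq I_j$. Choosing $p_j$ to be the largest point of $P$ with $f(p_j)>p_j$ (it exists with $1\le j\le n-1$ since $p_1$ moves right and $p_n$ moves left) gives $f(p_j)\ge p_{j+1}$ and $f(p_{j+1})\le p_j$, hence $I_j\to I_j$: every $G(P)$ has a self-loop.

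\textbf{The odd case (the heart).} Suppose $f$ has a periodic point of odd period $>1$ and let $n$ be the \emph{least} such odd period; fix a period-$n$ orbit $P$. The crucial step I would prove is \v{S}tefan's Lemma: for this minimal $n$, after relabelling the intervals, $G(P)$ contains a self-loop $I_1\to I_1$, a simple cycle $I_1\to I_2\to\cdots\to I_{n-1}\to I_1$ through all $n-1$ intervals, and enough ``backward'' edges among $I_2,\dots,I_{n-1}$ to furnish short even loops. The proof is pure combinatorics: minimality of $n$ forces the permutation induced on $P$ to be the \v{S}tefan permutation, zigzagging outward from the central point of $P$, and the edges above are read off from it. Granting the lemma, for each $m$ with $m\rhd n$ I exhibit a non-repetitive loop of length $m$: for $m\ge n$, take $m-(n-1)\ge 1$ turns of the self-loop followed by one pass around the long cycle (length $m$, visiting $I_2$ once, hence period exactly $m$); the long cycle alone yields the even period $n-1$; the backward edges yield the finitely many small even periods $2,4,\dots,n-3$. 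With period $1$ and the given period $n$, this produces every period $m\rhd n$, plus $n$. For an arbitrary periodic point of odd period $N\ge 3$, take $n\le N$ to be the least odd period $>1$; as $n$ precedes or equals $N$ in the order, every $m\rhd N$ satisfies $m\rhd n$, so all such $m$ (and $N$) are periods of $f$. This is Sharkovski\u\i's theorem whenever $f$ has an odd period $\ge 3$.

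\textbf{The remaining cases and the main obstacle.} Suppose $f$ has a point of period $2^{s}q$ with $q$ odd $>1$ and $s\ge 1$. Then $g=f^{2^{s}}$ has a point of period $q$ (its $g$-period is $2^{s}q/\gcd(2^{s}q,2^{s})=q$), so by the odd case $g$ has points of every period $\rhd q$, and $q$. Transferring back to $f$: a $g$-periodic point of even period $m'$ is forced to have $f$-period $2^{s}m'$, which already yields every $f$-period divisible by $2^{s+1}$; the periods $2^{s}q'$ with $q'$ odd $>q$ are obtained by first handling $s=1$ (a period-$q'$ point of $f^2$ has $f$-period $2q'$, done, or $f$-period $q'$, in which case the odd case applied to $f$ supplies period $2q'$), then reducing general $s$ to $s=1$ by passing to $f^{2^{s-1}}$, and iterating over $q'$; and since $g$ has period $2$, $f$ has period $2^{s+1}$, whence the powers $2^{t}$, $t\le s$, come from a renormalisation argument --- also covering the case where every period of $f$ is a power of $2$ --- that a period-$2^{k}$ orbit forces periods $2^{t}$ for all $t\le k$. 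Combining the odd case with these reductions proves the theorem. I expect the one genuinely hard point to be \v{S}tefan's Lemma --- pinning down the self-loop/Hamiltonian-cycle/backward-edge structure in the Markov graph of a minimal odd-period orbit --- together with the routine but delicate bookkeeping that makes each extracted loop yield a point of the exact intended period; all the rest is the intermediate value theorem and elementary arithmetic of cyclic permutations.
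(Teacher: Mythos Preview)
The paper does not prove Sharkovski\u\i's theorem: it is quoted as a classical result, with references to Sharkovski\u\i's original paper \cite{Sha} and to \v{S}tefan \cite{Ste} for a proof, and is then used as a black box. So there is no ``paper's own proof'' to compare your proposal against.

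That said, your outline is precisely the standard \v{S}tefan--Block--Guckenheimer--Misiurewicz--Young argument the paper points to: Markov graph of a periodic orbit, the itinerary/loop lemma, the combinatorial analysis of a minimal odd-period orbit (\v{S}tefan's lemma) producing the self-loop, the long cycle, and the short even loops, and then the doubling reductions for the $2^{s}q$ and pure $2^{k}$ tails. Your identification of the genuine difficulty (pinning down the \v{S}tefan structure and certifying that each extracted loop yields a point of the \emph{exact} intended period) is accurate. One small point worth tightening in a full write-up: in the reduction step, your claim that a $g$-periodic point of even $g$-period $m'$ (with $g=f^{2^{s}}$) must have $f$-period exactly $2^{s}m'$ is correct, but the passage producing the periods $2^{s}q'$ with odd $q'>q$ is sketched loosely; the usual clean way is an induction on $s$ (apply the full theorem, already proved for smaller~$s$, to $f^{2}$), which avoids the ad hoc case split you indicate.
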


According to Theorem~\ref{theo:Sharkovskii}, the set of periods of
periodic points of an interval map $f$ is, either $\{m\mid m\unrhd n \}$ for
some positive integer $n$, and in this case $f$ is said of \emph{type $n$},
or $\{2^n\mid n\geq 0\}$, and $f$ is said of \emph{type $2^{\infty}$}.
Remark that there exist interval maps of all types \cite{Ste, BP}.

\medskip
Our first motivation was to answer Snoha's questions. 
In Theorem~\ref{theo:structure-dense-chaos}, we actually obtain
a result on the structure of interval maps that are densely chaotic 
but not generically chaotic: for such a map $f$, there exists a descending
sequence of invariant intervals, with lengths tending to $0$, and
each of them contains a horseshoe for the map $f^2$ (that is, two closed 
non degenerate intervals $J,K$ with disjoint interiors such that 
$f^2(J)\cap f^2(K)\supset J\cup K$). On the other hand,
Snoha gave a characterisation of generic chaos in term of
transitive subintervals:

\begin{theorem}[Snoha \cite{Sno}]\label{theo:Snoha}
Let $f$ be an interval map.
The following conditions are equivalent:
\begin{itemize}
\item $f$ is generically chaotic,
\item either there exists a unique transitive non degenerate 
subinterval, or there exist two transitive non degenerate
subintervals with a common endpoint. Moreover
for every non degenerate interval $J$, $f^n(J)$ is non degenerate and
there exist a transitive subinterval $T$ and an integer $n\geq 0$
such that $f^n(J)\cap\Int{T}\not=\emptyset$.
\end{itemize}
\end{theorem}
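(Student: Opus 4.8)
\medskip
\noindent
The plan is to prove the two implications separately. For the implication ``the structural condition $\Rightarrow$ generic chaos'' I would actually establish the formally stronger statement that $\LY(f,\delta)$ is residual in $I\times I$, where $\delta>0$ is one third of the smallest length of a transitive subinterval (or of one of the two halves of such an interval, in the period\nobreakdash-$2$ situation recalled below); since $\LY(f,\delta)\subset\LY(f)$, generic chaos follows. Residuality will come from the observation that the set $R=\{(x,y)\mid\liminf_n|f^n(x)-f^n(y)|=0\}\cap\{(x,y)\mid\limsup_n|f^n(x)-f^n(y)|\ge 2\delta\}$ is $G_\delta$ (immediate from the usual countable-intersection descriptions) and is contained in $\LY(f,\delta)$, so that everything reduces to proving that $R$ is dense.

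For that forward implication I rely on two classical facts about a transitive interval map $g\colon T\to T$: one has $g(T)=T$; and either $g$ is topologically mixing, hence \emph{locally eventually onto} (for every non degenerate $W\subset T$ there is $k$ with $g^k(W)=T$), or there is a $g^2$-fixed point splitting $T$ into two subintervals $T^{(0)},T^{(1)}$ exchanged by $g$, with $g^2$ locally eventually onto each half. First I show that every non degenerate interval $J$ eventually covers a transitive subinterval: by hypothesis $f^m(J)$ is non degenerate and meets $\Int{T}$, hence contains a non degenerate $W\subset T$, and the locally-eventually-onto property forces $f^n(J)$ to contain $T$ (respectively one of the halves $T^{(i)}$, with $i$ depending only on the parity of $n$) for all large $n$. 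Next, in the two-interval case both transitive subintervals are automatically mixing: if $T_1,T_2$ are invariant with common endpoint $c$, then $f(c)\in T_1\cap T_2=\{c\}$, so $c$ is a fixed point, which is incompatible with the period\nobreakdash-$2$ structure (the endpoint $c$ would have to be mapped off its own half). Now fix a basic box $U\times V$. Both $U$ and $V$ eventually cover transitive subintervals, so for a common large $N$ we obtain subintervals $U'\subset U$, $V'\subset V$ and surjections $f^N|_{U'}$, $f^N|_{V'}$ onto $T$ (or onto halves $T^{(i)},T^{(j)}$, or onto $T_1$ and $T_2$). In each configuration the corresponding product map --- $f\times f$ on $T\times T$, or $f^2\times f^2$ on $T^{(i)}\times T^{(j)}$, or $f|_{T_1}\times f|_{T_2}$ on $T_1\times T_2$ --- is mixing, hence transitive, on a perfect compact space, so it has a residual set of points with dense orbit. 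Choosing such a point $(a,b)$ and pulling it back through the surjections yields $(x,y)\in U\times V$ whose $f\times f$-orbit visits every neighbourhood of the diagonal of that product (in the two-interval and period\nobreakdash-$2$ cases because the factors share the endpoint $c$) and also visits a neighbourhood of a ``far corner'' realizing a distance $\ge 3\delta$; hence $\liminf_n|f^n(x)-f^n(y)|=0$ and $\limsup_n|f^n(x)-f^n(y)|\ge 3\delta>2\delta$. Thus $R\cap(U\times V)\neq\emptyset$ for every box, $R$ is dense, and this part is finished.

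For the converse, assume $\LY(f)$ is residual. If some non degenerate interval $J$ had $f^n(J)$ degenerate for some $n$, then all points of $J$ would share one orbit from time $n$ on, so for disjoint non degenerate $A,B\subset J$ the open box $\Int{A}\times\Int{B}$ would contain no Li--Yorke pair, contradicting density of $\LY(f)$; hence $f^n(J)$ is always non degenerate. For the configuration of transitive subintervals I record three short facts. (i) Two distinct transitive subintervals overlap in at most one point: a non degenerate overlap (or the smaller one, if one contains the other) would be a proper closed non degenerate invariant subinterval of a transitive interval map, which is impossible, since a dense orbit entering it would be trapped there and could no longer be dense. (ii) Two transitive subintervals with disjoint closures are impossible: for $x\in\Int{T_1}$, $y\in\Int{T_2}$ the orbits stay in $T_1,T_2$, so $\liminf_n|f^n(x)-f^n(y)|$ exceeds the gap length and $\Int{T_1}\times\Int{T_2}$ contains no Li--Yorke pair. (iii) Consequently the transitive subintervals have pairwise disjoint interiors and pairwise touch, which on the real line forces there to be at most two of them, and if two, they share an endpoint. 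It remains to produce at least one transitive subinterval and to show that every non degenerate $J$ has some $f^n(J)$ meeting its interior; I would obtain this from the structure theory of $\omega$-limit sets of interval maps (each is either nowhere dense or a cycle of non degenerate intervals on whose union $f$ is transitive): a Baire-category argument applied to $f\times f$ over an arbitrary box shows that density of $\LY(f)$ forces a pair $(x,y)$ with $\omega_{f\times f}(x,y)$ meeting but not contained in the diagonal and of ``interval type'', and projecting a suitable iterate of its interval part gives a transitive subinterval that $f^n(J)$ reaches.

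The step I expect to be the main obstacle is precisely this last one in the converse: extracting a transitive \emph{subinterval} --- not merely an invariant scrambled Cantor set --- from the residuality of $\LY(f)$, and doing it uniformly enough over a basis of boxes to also yield the ``reaching'' property. This needs the full dichotomy for $\omega$-limit sets of interval maps together with a careful category argument. By contrast, the forward direction is essentially a pull-back once the classical transitivity facts about interval maps are invoked, and the geometric facts (i)--(iii) are routine.
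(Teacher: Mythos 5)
This statement is quoted from Snoha's paper \cite{Sno} and is not proved in the present article, so there is no ``paper proof'' to compare against; I can only assess your argument on its own terms. Your forward implication is essentially sound: the reduction to density of the $G_\delta$ set $R$, the dichotomy ``mixing or period-$2$ decomposition with mixing square'' for transitive interval maps, the locally-eventually-onto property, the pull-back through $f^N|_{U'}$, and the observation that in the two-interval case both pieces are mixing because the shared endpoint is fixed, all fit together correctly (modulo the standard caveat that ``eventually onto'' should be ``eventually covers any compact subinterval of the interior'', which does not harm the argument). Facts (i)--(iii) in the converse and the non-degeneracy of $f^n(J)$ are also fine.

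The genuine gap is the one you yourself flag: in the converse you never actually produce a transitive non degenerate subinterval, nor the ``reaching'' property. The sentence ``a Baire-category argument applied to $f\times f$ over an arbitrary box shows that density of $\LY(f)$ forces a pair $(x,y)$ with $\omega_{f\times f}(x,y)$ \ldots of interval type'' is not an argument. Worse, as stated it cannot be one: mere \emph{density} of $\LY(f)$ does not imply the existence of any transitive non degenerate subinterval --- this is exactly the content of Theorem~\ref{theo:structure-dense-chaos} of the present paper, which exhibits for densely-but-not-generically chaotic maps a nested sequence of arbitrarily small invariant intervals instead of a transitive one. So any correct proof of this step must use \emph{residuality} of $\LY(f)$ in an essential, quantitative way (Snoha's route goes through Proposition~\ref{prop:Snoha}: residuality forces a uniform $\delta$ with $\limsup_n|f^n(J)|>\delta$ for every non degenerate $J$, and the transitive subinterval is then built from the sets $\overline{\bigcup_{n\ge N}f^n(J)}$), and you would also need to justify the dichotomy for $\omega$-limit sets in the precise form ``nonempty interior implies cycle of intervals with transitive restriction'', which is a nontrivial theorem of Blokh/Sharkovski\u\i\ type. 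As written, the converse direction is incomplete at its crucial step.
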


We deduce from the structure of densely chaotic interval maps that
such maps are at most of type $6$ for Sharkovski\u\i's order and their
topological entropy is greater than or equal to $\frac{\log 2}{2}$ 
(Corollary~\ref{cor:dense-chaos-entropy-type}). Example~\ref{ex:equalities}
shows that equalities are possible, and in addition they can be realised by
generically chaotic interval maps.

\medskip
In \cite{Mur} Murinov\'a generalised Snoha's work and 
showed that for a complete metric space $X$,
generic $\delta$-chaos and dense $\delta$-chaos are equivalent.
She also exhibited a generically chaotic system which is
not generically $\delta$-chaotic for any $\delta>0$.

\medskip
If $X$ is a complete metric space and $G\subset X\times X$ is a dense 
$G_{\delta}$-set then using Kuratowski's theorem (see, e.g.,\cite{Oxt})
one can find an uncountable set $S$ such that $S\times S$ deprived of 
the diagonal of $X\times X$ is included in $G$ (see e.g., 
\cite[Lemma~3.1]{HY}). Therefore a generically chaotic map on
a complete metric space is chaotic in the sense of Li-Yorke.
In \cite{KuS}, Kuchta and Sm\'{\i}tal showed that on the interval
the existence of one Li-Yorke pair is enough to imply
chaos in the sense of Li-Yorke, consequently
dense chaos implies Li-Yorke chaos for interval maps.
However it is not known whether dense chaos implies
Li-Yorke chaos in general.

\medskip
Section~\ref{sec:horseshoe} contains our main results.
Some preliminary lemmas are needed, they are stated in 
Section~\ref{sec:preliminary}

\section{Preliminary results}\label{sec:preliminary}

\begin{lemma}\label{lem:dense-chaos}
Let $f$ be a densely chaotic interval map.
\begin{enumerate}
\item
If $J$ is a non degenerate interval then $f^n(J)$ is non degenerate
for all $n\geq 0$.
\item 
Consider disjoint non degenerate intervals $J_1,\ldots,J_p$ such
that $f(J_i)\subset J_{i+1\bmod p}$. Then either $p=1$, or $p=2$ and
$J_1, J_2$ have a common endpoint. If the intervals $J_i$ are closed
then $p=1$.
\item If $J, J'$ are invariant non degenerate intervals,
then $J\cap J'\not=\emptyset$.
\end{enumerate}
\end{lemma}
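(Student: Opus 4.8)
The three parts build on each other, so I would prove them in order. For part (1), the key observation is that if $f^n(J)$ were degenerate (a single point) for some $n$, then $f^n$ would collapse the non-degenerate interval $J$ to a point, hence all of $f^n(J')=f^n(J)$ would be that same point for the non-degenerate interval $J'=J$, and more importantly every pair $(x,y)\in J\times J$ would satisfy $f^m(x)=f^m(y)$ for all $m\ge n$, so $\limsup_m|f^m(x)-f^m(y)|=0$. Thus $J\times J$ would be a non-degenerate open box (well, $\Int{J}\times\Int{J}$) containing no Li-Yorke pair, contradicting density of $\LY(f)$ in $I\times I$. So I would argue by contradiction: pick the smallest such $n$, note $n\ge 1$ and $K:=f^{n-1}(J)$ is non-degenerate with $f(K)$ a point, then run the same argument on $K$ (or directly on $J$ via $f^{n}$).

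For part (2), suppose $J_1,\dots,J_p$ are pairwise disjoint non-degenerate intervals with $f(J_i)\subset J_{i+1\bmod p}$. If $p\ge 2$, I would first show the $\overline{J_i}$ are "almost disjoint": any point $(x,y)$ with $x\in\Int{J_i}$, $y\in\Int{J_j}$, $i\neq j$, has all its iterates staying in $\overline{J_{i+k}}\cup\overline{J_{j+k}}$, and since for each $k$ the indices $i+k$ and $j+k$ are distinct mod $p$, the orbits of $x$ and $y$ lie in disjoint (up to endpoints) intervals, so $\limsup_m|f^m(x)-f^m(y)|$ could still be positive but $\liminf_m|f^m(x)-f^m(y)|$ need not be $0$ unless the intervals share endpoints. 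The cleanest route: if the union $\bigcup\overline{J_i}$ leaves a gap between two of the intervals, then a whole open box in that configuration avoids $\LY(f)$. More precisely, if $p\ge 3$, then among $J_1,J_2,J_3$ at least two, say after relabeling $J_a$ and $J_b$, are separated by a non-degenerate gap whose distance from both is bounded below; since under iteration the pair $(f^m(x),f^m(y))$ with $x\in\Int{J_a},y\in\Int{J_b}$ always lands in a pair of the $\overline{J_i}$'s with distinct indices, and there are only finitely many such configurations, the $\liminf$ of the distance is bounded below by the minimal gap between any two non-adjacent $\overline{J_i}$ — so no Li-Yorke pairs in $\Int{J_a}\times\Int{J_b}$ unless all consecutive (in the cycle) pairs of closures touch, which forces $p\le 2$ and, when $p=2$, forces $\overline{J_1}\cap\overline{J_2}\neq\emptyset$, i.e. a common endpoint (they are disjoint, so the intersection is a single shared endpoint). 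The closed case: if the $J_i$ are closed and $p=2$ with a common endpoint $c$, then $f(c)\in\overline{J_2}\cap\overline{J_1}$... actually $f(J_1)\subset J_2$ and $f(J_2)\subset J_1$ force $f(c)\in J_1\cap J_2=\{c\}$, so $c$ is fixed; then I would show a box near $c$ avoids Li-Yorke pairs, or more simply invoke part (1): $f^2$ maps $J_1$ into $J_1$, and the two closed intervals $J_1,J_2$ meeting only at a fixed point $c$ with $f$ swapping them would make $f|_{J_1\cup J_2}$ have the diagonal-type behaviour — here I expect to need a short direct argument that $\Int{J_1}\times\Int{J_2}$ (or a sub-box) contains no Li-Yorke pair because distances stay bounded away from $0$, contradiction; hence $p=1$ in the closed case.

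For part (3), let $J,J'$ be invariant non-degenerate intervals and suppose $J\cap J'=\emptyset$. Then $\Int{J}\times\Int{J'}$ is a non-empty open subset of $I\times I$, and for any $(x,y)$ in it, $f^n(x)\in J$ and $f^n(y)\in J'$ for all $n\ge 0$ (invariance), so $|f^n(x)-f^n(y)|\ge \mathrm{dist}(J,J')>0$ for all $n$, giving $\liminf_n|f^n(x)-f^n(y)|>0$; thus this open box contains no Li-Yorke pair, contradicting density of $\LY(f)$.

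The main obstacle I anticipate is part (2), specifically making rigorous the claim that when the cyclic union of closures fails to be "connected" (adjacent closures touching) the $\liminf$ of distances along orbits is bounded below — one must track that the pair of indices $(i+m \bmod p,\, j+m\bmod p)$ stays off the diagonal and that only finitely many index-pairs occur, so the distance is bounded below by a positive constant depending only on the finite configuration; and then separately handling the degenerate-looking boundary cases ($p=2$ with/without shared endpoint, closed vs. not) cleanly. Part (1) is the enabling tool: it guarantees the relevant boxes like $\Int{J}\times\Int{J'}$ are genuinely non-degenerate open sets so that density of $\LY(f)$ can be contradicted.
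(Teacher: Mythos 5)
Parts (1) and (3) of your proposal are correct and are essentially the paper's arguments in contrapositive form: a degenerate image $f^n(J)$ forces $\limsup_m|f^m(x)-f^m(y)|=0$ on all of $J\times J$, and two disjoint invariant (hence, by the paper's definition, closed) intervals force $\liminf_n|f^n(x)-f^n(y)|\geq\mathrm{dist}(J,J')>0$ on $J\times J'$; in each case a non-empty open box misses $\LY(f)$, contradicting density.

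Part (2), however, has a genuine gap. Your lower bound for the $\liminf$ is $\min_{0\le m<p}\mathrm{dist}(J_{a+m},J_{b+m})$, the minimum over all index pairs visited by the orbit of the box $J_a\times J_b$. This minimum can be $0$ even when $\mathrm{dist}(J_a,J_b)>0$: take $p=3$ with $J_1=[0,1)$, $J_2=[1,2)$, $J_3=[2,3)$ cyclically permuted. Every offset class of index pairs contains the touching pair $\{1,2\}$, so your bound yields only $\liminf\ge 0$ for every starting box, and no contradiction is reached --- yet this chain configuration is precisely what the lemma must exclude. The missing idea is the uniform-continuity step used in the paper: choose $i,j$ with $D=\mathrm{dist}(J_i,J_j)>0$ and $\eta>0$ such that $|u-v|<\eta$ implies $|f^k(u)-f^k(v)|<D$ for all $0\le k\le p$. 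For $(x,y)\in J_i\times J_j$ one has $|f^{lp}(x)-f^{lp}(y)|\ge D$ for every $l$ (since $f^p(J_i)\subset J_i$), and any window of $p+1$ consecutive times contains a multiple of $p$; hence $|f^n(x)-f^n(y)|\ge\eta$ for \emph{all} $n$, killing every Li-Yorke pair in $J_i\times J_j$ regardless of whether intermediate pairs of intervals abut. Separately, your treatment of the closed case goes off track: disjoint closed intervals cannot share an endpoint (your ``$J_1\cap J_2=\{c\}$'' contradicts disjointness), so that whole discussion is vacuous; the correct observation is simply that disjoint closed intervals are at positive distance, so the main argument already forces $p=1$.
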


\begin{proof}

\noindent
i) If $J$ is a non degenerate interval then there exists $(x,y)\in
J\times J$ such that $(x,y)$ is a Li-Yorke pair, thus
$\limsup_{n\to+\infty} |f^n(J)|>0$ and  for every $n\geq 0$ the interval
$f^n(J)$ is not reduced to a point.

\medskip\noindent ii)
Let $J_1,\ldots,J_p$ be disjoint non degenerate intervals such that
$f(J_i)\subset J_{i+1\bmod p}$. Suppose that there exist  $0\leq
i,j\leq p$ such that the distance $D$ between $J_i$ and $J_j$ is
positive. By continuity there exists $\eta>0$ such that if
$|x-y|<\eta$ then $|f^k(x)-f^k(y)|<D$ for all $0\leq k\leq p$.  If
$(x,y)\in J_i\times J_j$ then for all $l\geq 0$ one has
$|f^{lp}(x)-f^{lp}(y)|\geq D$ thus for all $n\geq 0$
$|f^n(x)-f^n(y)|\geq  \eta$, which contradicts the assumption that $f$
is densely chaotic.  If the intervals $J_i$ are closed it implies that
$p=1$; otherwise it implies that $p=1$ or $p=2$, and if $p=2$ then
$J_1$ and $J_2$ have a common endpoint.

\medskip\noindent iii)
Let $J,J'$ be two invariant non degenerate intervals. 
Then there exists a Li-Yorke
pair $(x,x')$ in $J\times J'$, in particular there exists an
increasing sequence $(n_i)$ such that
$\lim_{i\to+\infty}|f^{n_i}(x)-f^{n_i}(x')|=0$. By compactness there
exist $(m_i)$ a subsequence of $(n_i)$ and a point $z$ such that
$\lim_{i\to+\infty}f^{m_i}(x)=\lim_{i\to+\infty}f^{m_i}(x')=z$
and the point $z$ belongs to $J\cap J'$.
\end{proof}

\begin{lemma}\label{lem:nested-Jn-to-0}
Let $f$ be a densely chaotic interval map. Suppose that there
exists a sequence of non degenerate invariant intervals 
$(J_n)_{n\geq 0}$ such that $\lim_{n\to+\infty}|J_n|
=0$. Then there exists a point $z\in \bigcap_{n\geq 0} J_n$ and 
$f(z)=z$. 

Moreover there exists a subsequence of closed non degenerate intervals
$(J'_n)_{n\geq 0}$ such that $f(J'_n)\subset J'_n$, 
$\lim_{n\to+\infty}|J'_n|=0$ and for all $n\geq 0$, $J'_{n+1}$ is included 
in the interior of $J'_n$ with respect to the induced topology on $J'_0$.
\end{lemma}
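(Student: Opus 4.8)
The plan is to prove the two assertions in sequence. For the first, take the intervals $J_n$ (which we may assume are nested by replacing $J_n$ with $\bigcap_{k\le n}J_k$, an intersection of invariant intervals which is an invariant interval, non degenerate for now — but if it degenerates we handle that separately). Actually, the cleanest route avoids assuming nestedness: by part~iii) of Lemma~\ref{lem:dense-chaos}, any two invariant non degenerate intervals meet, so the family $\{J_n\}$ has the finite intersection property (finite intersections are again invariant non degenerate intervals by induction), and by compactness $\bigcap_{n\ge 0}\overline{J_n}\ne\emptyset$. Since $|J_n|\to 0$, this intersection is a single point $z$. To see $f(z)=z$: pick $x_n\in J_n$; then $f(x_n)\in J_n$ too (invariance), and $x_n\to z$, $f(x_n)\to z$, so by continuity $f(z)=\lim f(x_n)=z$.

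For the second assertion I want a subsequence of \emph{closed} intervals that strictly nest inside one another (in the subspace $J'_0$) and still shrink to $z$. First replace each $J_n$ by its closure $\overline{J_n}$; this is still invariant (as $f(\overline{J_n})\subset\overline{f(J_n)}\subset\overline{J_n}$), closed, non degenerate, and contains $z$. So WLOG each $J_n$ is closed and $z\in J_n$. Now the key point: I claim that for each $n$, infinitely many of the $J_m$ are contained in the $J_0$-interior of $J_n$. Write $J_n=[a_n,b_n]$ with $a_n\le z\le b_n$. Because $|J_m|\to 0$ and $z\in J_m$, we have $a_m\to z$ and $b_m\to z$. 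If $z$ is not an endpoint of $J_0$, then the $J_0$-interior of $J_n$ is $(a_n,b_n)$, and since $a_m\to z>a_n$ (unless $a_n=z$) and $b_m\to z<b_n$ (unless $b_n=z$) we get $J_m\subset(a_n,b_n)$ for all large $m$ — \emph{provided} $a_n<z<b_n$. The subtlety is when $z$ is an endpoint of $J_n$ itself (say $a_n=z$): then $J_n=[z,b_n]$ and its $J_0$-interior is $[z,b_n)$ if $z$ is also the left endpoint of $J_0$, or $(z,b_n)$ otherwise; in the former case $J_m=[a_m,b_m]$ with $a_m\ge z$... but we only know $a_m\to z$, not $a_m=z$. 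So I may need to restrict to a subsequence where all the intervals share $z$ as a common endpoint, or pass to the situation at hand.

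A clean way to handle the endpoint issue uniformly: after passing to a subsequence, either (a) infinitely many $J_n$ have $z$ in their $J_0$-interior, or (b) for all large $n$, $z$ is a $J_0$-boundary point of $J_n$ — but $z$ can be a $J_0$-boundary point of a subinterval of $J_0$ only if $z$ is an endpoint of $J_0$, so in case (b) $z$ is a fixed endpoint of $J_0$, say $z=\min J_0$, and then $J_n=[z,b_n]$ with $b_n\to z$, $b_n$ eventually strictly decreasing along a further subsequence (since $b_n\to z$ and $b_n>z$, we can extract $b_{n_{k+1}}<b_{n_k}$), and then $[z,b_{n_{k+1}}]$ sits in the $J_0$-interior $[z,b_{n_k})$ of $[z,b_{n_k}]$. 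In case (a), $z$ is $J_0$-interior to $J_0$; choosing $J'_0$ to be one of the $J_n$ with $z$ in its interior and then inductively choosing $J'_{k+1}=J_{m}$ for $m$ large enough that $J_m\subset\Int{J'_k}$ (possible since $a_m\to z$, $b_m\to z$ and $a_{n'},b_{n'}$ are on opposite sides of $z$ once we know $J'_k$ contains $z$ in its interior — here I use that each $J_m$ contains $z$), we get the desired strictly nested sequence. The main obstacle is exactly this bookkeeping around whether $z$ lies in the interior or on the boundary relative to $J_0$; once organised as above it is routine, and the invariance $f(J'_n)\subset J'_n$ is inherited from the $J_n$ at every stage.
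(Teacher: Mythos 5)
Your overall architecture matches the paper's (pairwise intersections give a common point $z$, invariance gives $f(z)=z$, then a two-case extraction of a strictly nested subsequence), but two steps as written do not hold up. First, your justification of the finite intersection property is wrong: you claim that the finite intersections $\bigcap_{k\le n}J_k$ are again \emph{non degenerate} invariant intervals ``by induction'', but Lemma~\ref{lem:dense-chaos}(iii) only gives a non empty intersection, and nothing rules out two non degenerate invariant intervals meeting in a single point. Once some finite intersection degenerates to a point $\{w\}$, Lemma~\ref{lem:dense-chaos}(iii) no longer applies to it, the induction stops, and you cannot conclude $w\in J_m$ for the remaining $m$. This is exactly the case on which the paper spends the first half of its proof: it takes the largest $N$ with $K=\bigcap_{k\le N}J_k$ non degenerate, identifies $K\cap J_{N+1}=\{z\}$, and uses connectedness (each $J_m$ meets both $K$ and $J_{N+1}$, whose intersection is $\{z\}$) to force $z\in J_m$. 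Your claim is repairable more quickly by Helly's theorem in dimension one (pairwise intersecting intervals on the line have a common point), but the parenthetical argument you actually give is false, and you explicitly defer the degenerate case without ever returning to it.

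Second, the relative-interior bookkeeping in the ``moreover'' part is inverted. A point $z$ of a closed subinterval $J_n\subset J_0$ is a $J_0$-boundary point of $J_n$ precisely when $z$ is an endpoint of $J_n$ that is \emph{not} the corresponding endpoint of $J_0$; so the claim ``$z$ can be a $J_0$-boundary point of a subinterval of $J_0$ only if $z$ is an endpoint of $J_0$'' is exactly backwards (take $J_0=[0,1]$, $J_n=[1/2,\,1/2+1/n]$, $z=1/2$), and the deduction that in case (b) $z$ is an endpoint of $J_0$ is unjustified. Note also that the reference interval in the statement is $J'_0$, the first term of the \emph{extracted} subsequence, not the given $J_0$ (and the $J_n$ need not be subintervals of $J_0$ at all, since the original sequence is not assumed nested). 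The dichotomy you want is simply: either $a_n<z<b_n$ for infinitely many $n$, in which case you nest greedily inside the $\IR$-interiors; or $z$ is an endpoint of $J_n$ for all large $n$, and by pigeonhole infinitely many share $z$ as (say) their left endpoint, after which extracting strictly decreasing lengths gives $J'_{k+1}=[z,b_{k+1}]\subset[z,b_k)$, which is relatively open in $J'_0=[z,b_0]$ because $z$ is an endpoint of $J'_0$. Your two constructions are exactly these and they do work; it is the case split and its justification that must be redone. (The paper avoids the issue by first replacing the intervals by the nested family $K_n=\bigcap_{i\le n}J_{n_i}$, all sticking out on the same side of $z$, and then splitting on whether a strictly nested subsequence exists or all the $K_n$ eventually share an endpoint.)
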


\begin{proof}
First we show that
$\bigcap_{n=0}^{+\infty} J_n\not=\emptyset$. If $\bigcap_{n=0}^N J_n$
is non degenerate for all $N\geq 0$ then $\bigcap_{n=0}^{+\infty} J_n$
is not empty.  Otherwise let $N\geq 0$ be the greatest integer such
that $\bigcap_{n=0}^N J_n$ is non degenerate.  The interval
$K=\bigcap_{n=0}^N J_n$ is closed, non degenerate and $f(K) \subset
K$. By Lemma~\ref{lem:dense-chaos}(iii), $J_{N+1}\cap K\not=
\emptyset$ thus the set $J_{N+1}\cap K$ is reduced to one point $z$. For
every $n\geq 0$ one has $J_n\cap K\not=\emptyset$ and $J_n\cap
J_{N+1}\not= \emptyset$ by Lemma~\ref{lem:dense-chaos}(iii) thus by
connectedness $z\in J_n$. Consequently $z\in \bigcap_{n=0}^{+\infty}J_n$.

The set $\bigcap_{n=0}^{+\infty}J_n$ is reduced to a single point $z$
because $|J_n|\to 0$. One has $f(z)=z$ because
$f(J_n)\subset J_n$ for all $n\geq 0$.

There exists an increasing sequence $(n_i)_{i\geq 0}$ such that
either $J_{n_i}\cap (z,+\infty)\not=\emptyset$ for all $i\geq 0$ or
$J_{n_i}\cap (-\infty,
z)\not=\emptyset$ for all $i\geq 0$.  
Define $K_n= \bigcap_{i=0}^n J_{n_i}$; this is a closed non degenerate
interval, $f(K_n)\subset K_n$ and $K_{n+1}\subset K_n$.

\medskip\noindent
Case 1. There exists an increasing sequence $(m_i)_{i\geq
0}$ such that $K_{m_{i+1}}\subset \Int{K_{m_i}}$ for all $i\geq 0$.
Take then $J'_i=K_{m_i}$.

\medskip\noindent
Case 2. If the assumption of case 1 is not satisfied then
there exists $N\geq 0$ such that for all $n\geq N$, $K_n\not\subset
\Int{K_N}$, that is, either $\min K_n=\min K_N$ for all $n\geq N$ or
$\max K_n=\max K_N$ for all $n\geq N$. Since $|K_n|\to 0$ one can find
an increasing sequence $(m_i)_{i\geq 0}$ with $m_0=N$ such that
$|K_{m_{i+1}}|<|K_{m_i}|$ for all $i\geq 0$. In this case
$J'_i=K_{m_i}$ is a suitable subsequence of intervals.
\end{proof}

\begin{lemma}\label{lem:interval-no-fixed-point}
Let $J$ be a bounded interval and $f\colon J\to J$ a
continuous map with no fixed point. If $K\subset J$ is a
compact interval then $\displaystyle\lim_{n\to+\infty}|f^n(K)|=0$.
\end{lemma}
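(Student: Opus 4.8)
\medskip
The plan is to reduce the statement to the elementary fact that, when $f$ has no fixed point, the forward orbit of every point of $J$ converges monotonically to an endpoint of $J$, and then to upgrade this to \emph{uniform} convergence on the compact interval $K$ via Dini's theorem.

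First I would observe that, $J$ being connected and $f$ having no fixed point, the continuous function $x\mapsto f(x)-x$ has constant sign on $J$; up to the symmetry $x\mapsto -x$ I may therefore assume $f(x)>x$ for all $x\in J$ (the case $f(x)<x$ is identical, with $\inf J$ playing the role of $\sup J$ below). I may also assume $\inf J<\sup J$, both finite since $J$ is bounded. Then for each $x\in J$ the sequence $(f^n(x))_{n\geq0}$ is strictly increasing, since $f^{n+1}(x)=f(f^n(x))>f^n(x)$, and bounded above by $\sup J$; hence it converges to some $\ell(x)$ with $\inf J\leq x<f(x)\leq\ell(x)\leq\sup J$. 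The key point is that $\ell(x)=\sup J$ for every $x\in J$: otherwise $\inf J<\ell(x)<\sup J$, so $\ell(x)$ lies in the interval $J$ and $f$ is continuous there, and letting $n\to+\infty$ in $f^{n+1}(x)=f(f^n(x))$ gives $f(\ell(x))=\ell(x)$, a fixed point of $f$, a contradiction. Thus $f^n\to\sup J$ pointwise on $J$, in particular on $K$.

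It remains to obtain uniformity. The maps $f^n|_K$ are continuous on the compact set $K$, for each $x\in K$ the sequence $(f^n(x))_n$ is monotone (increasing), and it converges pointwise to the constant, hence continuous, function $\sup J$; by Dini's theorem the convergence is uniform, i.e.\ $\eps_n:=\sup_{x\in K}\bigl(\sup J-f^n(x)\bigr)\to 0$. Consequently $f^n(K)\subset[\sup J-\eps_n,\sup J]$, and as $f^n(K)$ is an interval this yields $|f^n(K)|\leq\eps_n\to 0$. I expect the only mildly delicate step to be the identity $\ell(x)=\sup J$, where one has to make sure $\ell(x)$ really belongs to $J$ (hence the strict inequalities $\inf J<\ell(x)<\sup J$) before invoking continuity of $f$ at that point; everything else is routine. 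One can also bypass Dini's theorem directly: for fixed $\eps>0$ the open sets $\{x\in K:f^n(x)>\sup J-\eps\}$ increase with $n$ and cover $K$, so by compactness $f^N(K)\subset(\sup J-\eps,\sup J]$ for some $N$, and then for all $n\geq N$ as well, whence $|f^n(K)|\leq\eps$ eventually.
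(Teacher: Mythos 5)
Your proof is correct, and it takes a genuinely different (though related) route from the paper's. You first establish pointwise convergence of every orbit to the endpoint $\sup J$ — the fixed-point-at-the-limit argument, which is valid because any limit lying strictly between $\inf J$ and $\sup J$ would belong to the interval $J$ and be a fixed point — and then upgrade this to uniform convergence on the compact interval $K$ via Dini's theorem (or your nested-open-cover variant, which is even more self-contained since it uses the monotonicity $f(y)>y$ to make the cover increasing in $n$). The paper instead works directly with the image intervals: writing $f^n(K)=[a_n,b_n]$, it shows that $(b_n)$ is decreasing, that its limit $z$ cannot lie in $J$ (otherwise a whole neighbourhood of $z$ in $[a_n,b_n]$ would be mapped strictly below $z$, forcing $b_{n+1}<z$), hence $z=\inf J$, and concludes from $a_n>\inf J$ that $|f^n(K)|\leq b_n-z\to 0$. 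The two arguments share the same mechanism — monotone drift towards an endpoint, with a local continuity argument excluding an interior limit — but the paper obtains uniformity over $K$ for free by tracking $\max f^n(K)$, whereas you obtain it by a separate compactness step; yours is the more standard dynamical argument, while the paper's avoids invoking Dini and handles the slight subtlety that $b_{n+1}$ is the image of \emph{some} point of $[a_n,b_n]$ rather than of $b_n$ itself.
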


\begin{proof}
Suppose that $f(x)<x$ for all $x\in J$, the case $f(x)>x$ for all
$x\in J$ being similar. Write $f^n(K)=[a_n,b_n]$. For every $n\geq 0$
there exists $x\in [a_n,b_n]$ such that $f(x)=b_{n+1}$, thus
$b_{n+1}<x\leq b_n$. The sequence $(b_n)_{n\geq 0}$ is decreasing thus
has a limit in $\overline{J}$; let $z=\lim_{n\to+\infty} b_n$. 
Suppose that $z\in J$. Let $\eps>0$ such that $f(z)+\eps<z$.
By continuity there exists $\eta>0$ such that if $|x-z|<\eta$
then $|f(x)-f(z)|<\eps$. Let $n\geq 0$ such that $|b_n-z|<\eta$.
Then for all $x\in [z,b_n]$ one has $f(x)<f(z)+\eps<z$ and for
all $x\in [a_n,z)$ one has $f(x)<x<z$. This implies that $b_{n+1}<z$,
which is absurd.
Hence $z=\inf J$ and $z<a_n\leq b_n$
for all $n\geq 0$. This implies that $|f^n(K)|\leq |b_n-z|\to 0$ when
$n$ goes to infinity.
\end{proof}

\begin{proposition}[Snoha \cite{Sno}]\label{prop:Snoha}
Let $f$ be an interval map and $\delta>0$.
The following conditions are equivalent:
\begin{itemize}
\item $f$ is generically $\delta$-chaotic,
\item for all non degenerate intervals
$J_1,J_2$ one has $\limsup_{n\to+\infty}|f^n(J_1)|>\delta$ and
$\liminf_{n\to+\infty}{\rm dist}(f^n(J_1),f^n(J_2))=0$ (where
${\rm dist}(\cdot,\cdot)$ denotes the distance between two sets),
\end{itemize}
\end{proposition}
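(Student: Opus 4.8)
The statement is an equivalence whose two halves carry very different weight, so I would prove them separately. The direction \emph{generic $\delta$-chaos $\Rightarrow$ the interval condition} is soft. Since $\LY(f,\delta)$ is residual it is dense. Fix non-degenerate intervals $J_1,J_2$. A Li-Yorke pair of modulus $\delta$ in the non-empty open set $\Int{J_1}\times\Int{J_1}$ is a pair $(x,y)$ with $x,y\in J_1$ and $\limsup_n|f^n(x)-f^n(y)|>\delta$, whence $\limsup_n|f^n(J_1)|\geq\limsup_n|f^n(x)-f^n(y)|>\delta$. A Li-Yorke pair in $\Int{J_1}\times\Int{J_2}$ is a pair with $x\in J_1$, $y\in J_2$ and $\liminf_n|f^n(x)-f^n(y)|=0$, and since ${\rm dist}(f^n(J_1),f^n(J_2))\leq|f^n(x)-f^n(y)|$ this forces $\liminf_n{\rm dist}(f^n(J_1),f^n(J_2))=0$.

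For \emph{the interval condition $\Rightarrow$ generic $\delta$-chaos}, assume the interval condition. Note first that $f^n(J)$ is non-degenerate for every non-degenerate $J$ and every $n$, since otherwise $|f^m(J)|=0$ for all large $m$, contradicting $\limsup_m|f^m(J)|>\delta$ (this is Lemma~\ref{lem:dense-chaos}(i) in the present setting). Write
$$
\LY(f,\delta)=\{(x,y):\limsup_n|f^n(x)-f^n(y)|>\delta\}\ \cap\ \{(x,y):\liminf_n|f^n(x)-f^n(y)|=0\}.
$$
The second factor equals $\bigcap_{k\geq1}\bigcap_{N\geq0}H_{N,k}$ where $H_{N,k}=\bigcup_{n\geq N}\{(x,y):|f^n(x)-f^n(y)|<1/k\}$ is open; each $H_{N,k}$ is dense because the hypothesis $\liminf_n{\rm dist}(f^n(U_1),f^n(U_2))=0$ provides, inside any box $U_1\times U_2$ of non-degenerate intervals, some $n\geq N$ and points $x\in\Int{U_1},y\in\Int{U_2}$ with $|f^n(x)-f^n(y)|<1/k$. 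So the second factor is a dense $G_\delta$ of $I^2$ (a compact, hence Baire, metric space), and everything reduces to proving that the first factor is residual.

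To attack the $\limsup$ set I would aim to produce a fixed $c>0$ for which $D_N:=\bigcup_{n\geq N}\{(x,y):|f^n(x)-f^n(y)|>\delta+c\}$ is dense for every $N$; then $\bigcap_N D_N$ is a dense $G_\delta$ of pairs with $\limsup_n|f^n(x)-f^n(y)|>\delta$, and intersecting with the previous one finishes. For density of $D_N$ in a box $U_1\times U_2$, first use $\liminf_n{\rm dist}(f^n(U_1),f^n(U_2))=0$ to pick a large $n_0\geq N$ and a small $\eps$ with ${\rm dist}(f^{n_0}(U_1),f^{n_0}(U_2))<\eps$; then shrink, taking preimages of short intervals, to a sub-box $V_1\times V_2$ whose two sides are mapped by $f^{n_0}$ into a common short interval while keeping $f^{n_0}(V_1)$ a non-degenerate interval; then invoke $\limsup_j|f^j(f^{n_0}(V_1))|>\delta$ to re-expand, getting $j$ with $|f^{n_0+j}(V_1)|>\delta+c$; and finally a diameter estimate — if every cross-pair in $\bar V_1\times\bar V_2$ had $|f^{n_0+j}(x)-f^{n_0+j}(y)|\leq\delta+c$ then, $f^{n_0}(V_2)$ lying $\eps$-close to $f^{n_0}(V_1)$, the interval $f^{n_0+j}(V_1)$ would be too short — yields the required cross-pair. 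The point of first bringing the intervals together via the $\liminf$-condition is precisely so that the one-interval expansion furnished by the $\limsup$-condition can be charged to the \emph{cross} distance rather than lost to a factor of two.

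Two points are genuinely delicate. One is a circularity of quantifiers: the closeness $\eps$ needed above depends on the expansion time $j$, but $j$ is only produced once the sub-box is fixed; untangling this — fix $j$ by running the $\limsup$-condition on $f^{n_0}(U_1)$ first, then pull back to choose $V_1,V_2$ small enough, then reapply the $\liminf$-condition at an instant compatible with $j$ — is where the real bookkeeping sits. The other, and the more serious obstacle, is getting the \emph{strict} inequality $\limsup>\delta$ of the definition: the naive Baire presentations only deliver pairs with $|f^n(x)-f^n(y)|>\delta$ infinitely often, i.e.\ $\limsup\geq\delta$, so one must know that $\{\limsup=\delta,\ \liminf=0\}$ is meagre — equivalently, one must upgrade the per-interval bound $\limsup_n|f^n(J)|>\delta$ to a \emph{uniform} bound $\limsup_n|f^n(J)|>\delta+c$ valid for all non-degenerate $J$ with one $c>0$. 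This last uniformisation, presumably via the monotonicity of $J\mapsto\limsup_n|f^n(J)|$ together with its invariance under $f$ and a Hausdorff-compactness argument on the space of subintervals, is where I expect the real work of the proof to be.
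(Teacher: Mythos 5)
The paper offers no proof of this proposition: it is quoted from Snoha's \emph{Generic chaos} paper \cite{Sno} and used as a black box, so there is nothing internal to compare your argument against. Judged on its own, your first implication (generic $\delta$-chaos $\Rightarrow$ the interval condition) is complete and correct, and in the converse your treatment of the $\liminf$ factor as the dense $G_\delta$ $\bigcap_{k,N}H_{N,k}$ is sound.

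The converse direction, however, is not a proof but an honest map of the difficulties, and the central one is left unresolved. Your whole strategy for the $\limsup$ factor rests on producing a single $c>0$ with $\limsup_{n\to+\infty}|f^n(J)|\geq\delta+c$ for \emph{every} non degenerate $J$; without it the Baire argument only yields a residual set of pairs satisfying $|f^n(x)-f^n(y)|>\delta$ infinitely often, i.e. $\limsup_{n}|f^n(x)-f^n(y)|\geq\delta$, which misses the strict inequality required by the definition of $\LY(f,\delta)$. You say this uniformisation should follow ``presumably via monotonicity \dots and a Hausdorff-compactness argument,'' but no such argument is supplied, and it is not routine: the infimum of $J\mapsto\limsup_n|f^n(J)|$ is taken over a family of intervals that may shrink to points, so there is no obvious compactness forcing the infimum to be attained, and this is precisely where the substance of Snoha's proof lies (in \cite{Sno} it comes out of the structure theory of generically chaotic maps by transitive subintervals, i.e. the circle of ideas behind Theorem~\ref{theo:Snoha}, which pins down the possible values of $\limsup_n|f^n(J)|$). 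A localisation of Baire's theorem (letting $c$ depend on the box) does not escape the problem, since density of your $D_N$ in a box already requires one constant valid for all of its sub-boxes. The secondary step --- the ``diameter estimate'' producing a cross pair of modulus $>\delta+c$ rather than losing a factor of two, together with the quantifier circularity you flag --- is likewise only sketched. So the proposal correctly identifies where the work is, including the two genuinely delicate points, but does not carry out the harder implication.
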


\begin{lemma}\label{lem:dense-chaos-dense-delta-chaos}
Let $f$ be a densely chaotic interval map.
Suppose that there exists $\eps>0$ such that, for every non degenerate
invariant interval $J$, $|J|\geq\eps$. Then $f$ is generically chaotic.
\end{lemma}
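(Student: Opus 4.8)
The plan is to verify Snoha's criterion (Theorem~\ref{theo:Snoha}) directly: one of its two clauses, that $f^n(J)$ be non degenerate for every non degenerate $J$, is already Lemma~\ref{lem:dense-chaos}(i), so it remains to produce the transitive subinterval(s) and to check the reachability clause.

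\emph{The central interval.} Let $\mathcal W$ be the family of all closed non degenerate invariant intervals; it contains $I$, so it is non empty. By Lemma~\ref{lem:dense-chaos}(iii) any two members of $\mathcal W$ meet, hence by the one--dimensional Helly property every finite subfamily has a common point, and the corresponding finite intersection is again a closed invariant interval --- thus either non degenerate, and then of length $\ge\eps$ by hypothesis, or a single fixed point. Put $W=\bigcap\mathcal W$. If $W$ is non degenerate it is the least element of $\mathcal W$. If $W=\{z\}$ with $f(z)=z$, then $z\in\Int{I}$ (otherwise the hypothesis and Helly would make $W$ non degenerate), and from the members of $\mathcal W$ that meet $(-\infty,z)$, resp.\ $(z,+\infty)$, one extracts a least such interval $W_L$, resp.\ $W_R$; these are non degenerate by the same Helly argument, and either $W_L\cap W_R=\{z\}$ or $W_L=W_R$. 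In all cases we obtain one interval, or two intervals with a common endpoint; denote by $W^\ast$ any one of them. Its key property is that it possesses \emph{no proper non degenerate invariant subinterval}.

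\emph{A lemma on orbits of intervals.} I claim that for every non degenerate $S\subset I$ the set $Y_S=\overline{\bigcup_{n\ge0}f^n(S)}$ contains a non degenerate invariant interval $E_S$ with $f^n(S)\subset E_S$ for all large $n$. Each $f^n(S)$ is a non degenerate interval (Lemma~\ref{lem:dense-chaos}(i)), hence lies in a single connected component $E^{(n)}$ of $Y_S$, and one checks $f(E^{(n)})\subset E^{(n+1)}$. Were $(E^{(n)})$ not eventually constant, its successive terms would be pairwise distinct, so a repetition $E^{(n)}=E^{(m)}$ would give a cycle of at least two pairwise disjoint closed non degenerate intervals permuted cyclically by $f$, contradicting Lemma~\ref{lem:dense-chaos}(ii); thus all the $E^{(n)}$ would be distinct, $Y_S$ would have infinitely many components, $|E^{(n)}|\to0$, hence $|f^n(S)|\to0$ --- impossible, since $\limsup_n|f^n(S)|>0$ (this is what the proof of Lemma~\ref{lem:dense-chaos}(i) actually shows). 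So $(E^{(n)})$ stabilises at some $E_S$, and then $f(E_S)\subset E_S$. This step, together with the case analysis around $z$ in the previous paragraph, is the part I expect to require the most care.

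\emph{Conclusion.} If $f|_{W^\ast}$ were not transitive there would be non empty open intervals $U,V\subset W^\ast$ with $f^n(U)\cap V=\emptyset$ for all $n$; then $E_U\subset Y_U\subset W^\ast\setminus V$ would be a proper non degenerate invariant subinterval of $W^\ast$, a contradiction. Hence $W^\ast$ is a transitive subinterval. Conversely any transitive subinterval lies in $\mathcal W$, so it contains $W$, and since a transitive interval map has no proper non degenerate invariant subinterval, it must equal $W$, or one of $W_L,W_R$; thus there is a unique transitive subinterval, or exactly two sharing an endpoint. Finally, let $J$ be non degenerate. If no $f^n(J)$ met the interior of a transitive subinterval, then $Y_J$ would be disjoint from $\Int{W^\ast}$; but by the orbit lemma $Y_J$ contains a non degenerate invariant interval $E_J\in\mathcal W$, which therefore contains $W$, hence $W^\ast$ for a suitable choice, so $Y_J\supset\Int{W^\ast}\neq\emptyset$ --- a contradiction. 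All hypotheses of Theorem~\ref{theo:Snoha} being met, $f$ is generically chaotic.
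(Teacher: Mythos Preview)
Your argument is correct, but it takes a genuinely different route from the paper. The paper does \emph{not} verify the structural characterisation of Theorem~\ref{theo:Snoha}; instead it invokes Proposition~\ref{prop:Snoha}. It argues by contradiction: if for every $\delta>0$ one could find a closed non degenerate $J$ with $|f^n(J)|\le\delta$ for all $n$, then (using Lemma~\ref{lem:dense-chaos}(ii)) the forward orbit of $J$ eventually sits in a single closed interval $L$; if $L$ contained a point of period $\le 2$ one would get $|L|\le 4\delta$, contradicting the hypothesis $|L|\ge\eps$, while if not, Lemma~\ref{lem:interval-no-fixed-point} applied to $f^2$ forces $|f^n(J)|\to 0$, contradicting dense chaos. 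Hence there is a uniform $\delta>0$ with $\limsup_n|f^n(J)|\ge\delta$ for every non degenerate $J$, and together with the obvious $\liminf_n\mathrm{dist}(f^n(J_1),f^n(J_2))=0$ this feeds directly into Proposition~\ref{prop:Snoha}.

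Comparing the two: the paper's proof is shorter and, as a bonus, actually yields generic $\delta$-chaos for an explicit $\delta$. Your approach gives more structural information --- you explicitly locate the transitive core $W$ (or $W_L,W_R$) as the intersection of all closed non degenerate invariant intervals, and your ``orbit lemma'' (that $\overline{\bigcup_n f^n(S)}$ eventually concentrates in a single invariant component) is a nice self-contained statement. The price is the extra care needed in the $W=\{z\}$ case and in the component-stabilisation argument; both are handled correctly, though the phrasing around ``successive terms would be pairwise distinct, so a repetition\dots'' would read more smoothly if you first observe that $E^{(k+1)}$ is determined by $E^{(k)}$, so any repetition makes the sequence eventually periodic, and period $\ge 2$ is excluded by Lemma~\ref{lem:dense-chaos}(ii).
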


\begin{proof}
Suppose that
\begin{equation}\label{eq:fnJ<delta}
\forall \delta>0,\ \exists\, J \mbox{ closed non degenerate interval, } \forall n\geq 0,\ |f^n(J)|\leq\delta.
\end{equation}
We are going to show that is not possible. Let $0<\delta<\eps/4$
and let $J$ be a
closed non degenerate interval such that $|f^n(J)|\leq\delta$ for all
$n\geq 0$. There exists a Li-Yorke pair $(x,y)\in J\times J$ because $f$ is
densely chaotic, thus
\begin{equation}\label{eq:limsup-fnJ}
\limsup_{n\to+\infty}|f^n(J)|>0.
\end{equation}
This implies that there exist $N,p$ such that $f^N(J)\cap
f^{N+p}(J)\not= \emptyset$, thus $f^n(J)\cap f^{n+p}(J)\not=\emptyset$ for 
all $n\geq N$. Since $f^n(J)$ is an interval, this implies that, for every
$0\leq i\leq p-1$, the set $Z_i=\bigcup_{k\geq 0}f^{N+i+kp}(J)$ is an
interval, too. Consequently, the set
$Z=\bigcup_{n\geq N} f^n(J)$ has
at most $p$ connected components, which are non degenerate by
Lemma~\ref{lem:dense-chaos}(i). The image of a connected component is
connected and $f(Z_i)\subset Z_{i+1\bmod p}$, thus the connected components
of $Z$ are necessarily cyclically mapped into each other and
Lemma~\ref{lem:dense-chaos}(ii) applies: 
$Z$ has, either one connected component,
or two connected components with a common endpoint, and $\overline{Z}$
is a closed interval.

If there exist a point $z$ and an integer $n_0\geq N$ such that
$f^2(z)=z$ and $z\in f^{n_0}(J)$ then $z\in f^{n_0+2k}(J)$ for all
$k\geq 0$. Since $|f^n(J)|\leq\delta$ for all $n\geq 0$ one gets that
$\left|\bigcup_{k\geq 0}f^{n_0+2k}(J)\right|\leq 2\delta$ and
$\left|\bigcup_{k\geq 0}f^{n_0+2k+1}(J)\right|\leq 2\delta$.  Let
$L=\overline{\bigcup_{n\geq n_0}f^n(J)}=f^{n_0-N}(\overline{Z})$.
Then $L$ is a closed non degenerate interval, $f(L)\subset L$ and
$|L|\leq 4\delta$. Moreover $|L|\geq\eps$ according to the hypothesis 
of the Lemma,
which is a contradiction because we have
chosen $\delta<\eps/4$. We deduce that $Z$ contains no point $z$ such
that $f^2(z)=z$.

Let $Z_0$ be the connected component of $Z$ containing $f^N(J)$ and
put $g=f^2$. Then $g(Z_0)\subset Z_0$ and $g|_{Z_0}$ has no fixed
point.  The interval $K=f^N(J)$ is compact because $J$ is 
compact and $f^N$ is continuous, so 
Lemma~\ref{lem:interval-no-fixed-point} applies 
and we get that $\lim_{n\to+\infty}|f^{N+2n}(J)|=0$. By
continuity of $f$ we get that $|f^{n}(J)|\to 0$ when $n$ goes to
infinity, which contradicts Equation~(\ref{eq:limsup-fnJ}). We
conclude that Equation~(\ref{eq:fnJ<delta}) is false, consequently
there exists $\delta>0$ such that for all closed non degenerate intervals $J$,
$\limsup_{n\to+\infty}|f^n(J)|\geq\delta$.
The map $f$ is densely chaotic thus for every non degenerate intervals
$J_1,J_2$ there is a Li-Yorke pair in $J_1\times J_2$, hence
$\liminf_{n\to+\infty} {\rm dist}(f^n(J_1),f^n(J_2))=0$. Then
Proposition~\ref{prop:Snoha} implies that $f$ is generically 
chaotic.
\end{proof}

\section{Structure of densely chaotic interval maps}\label{sec:horseshoe}

Recall that the interval map $g$ has a horseshoe if
there exist two closed non degenerate subintervals $J,K$ with
disjoint interiors such that $g(J)\cap g(K)\supset J\cup K$.

\begin{theorem}\label{theo:structure-dense-chaos}
Let $f$ be an interval map. If $f$ is densely chaotic but not generically
chaotic then 
there exists a sequence of invariant non degenerate subintervals
$(J_n)_{n\geq 0}$ such that $J_{n+1}\subset J_n$, $\lim_{n\to+\infty}|J_n|=0$,
and $f^2|_{J_n}$ has a horseshoe for all $n\geq 0$.
\end{theorem}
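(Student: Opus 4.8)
The plan is to first extract the nested sequence of invariant intervals, and then show each one contains a horseshoe for $f^2$. Since $f$ is densely but not generically chaotic, Lemma~\ref{lem:dense-chaos-dense-delta-chaos} fails to give generic chaos, so its hypothesis must be violated: for every $\eps>0$ there is a non degenerate invariant interval of length less than $\eps$. I would use this to build a sequence $(K_n)_{n\geq 0}$ of non degenerate invariant intervals with $|K_n|\to 0$, then invoke Lemma~\ref{lem:nested-Jn-to-0} to pass to a subsequence of closed non degenerate intervals, still invariant under $f$ and with lengths tending to $0$, that is genuinely nested ($J_{n+1}$ in the interior of $J_n$ relative to $J_0$) and shares a common fixed point $z=\bigcap_n J_n$. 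Relabelling, I keep calling these $J_n$.

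The heart of the argument is to produce, inside each invariant interval $J_n$, a horseshoe for $f^2$. Fix $n$ and write $J=J_n$, $z$ the common fixed point, $g=f^2$. The key is that $f|_J$ itself cannot avoid a fixed point other than $z$ in a strong sense, and more importantly $g|_J$ must have a ``turning'' behaviour forcing a horseshoe. Concretely: if $g|_J$ had no fixed point other than possibly $z$ on one side, then by Lemma~\ref{lem:interval-no-fixed-point} (applied to $g$ restricted to a suitable subinterval of $J$ lying to one side of $z$) the images $g^k(L)$ of any compact $L\subset J$ on that side would shrink to $0$, and combining the two sides one would get $|f^k(J')|\to 0$ for every compact $J'\subset J$. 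This, together with the fact that for every non degenerate interval there is a Li-Yorke pair inside it (so $\limsup_k|f^k(J')|>0$ for every non degenerate $J'$), is already a contradiction — mirroring the final paragraph of the proof of Lemma~\ref{lem:dense-chaos-dense-delta-chaos}. Hence $g|_J$ has a fixed point $w\ne z$, say with $z<w$ (the other case is symmetric), and I would look at the dynamics of $g$ on $[z,w]\subset J$: $g$ fixes both endpoints. If $g$ maps $[z,w]$ into itself we would be stuck, so the real claim is that $g$ must overflow: there is a point in $[z,w]$ whose $g$-image lands outside $[z,w]$, i.e. $g$ is not monotone-preserving on this interval, which is exactly what is needed to get two subintervals $A,B\subset J$ with disjoint interiors and $g(A)\cap g(B)\supset A\cup B$.

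To make that last claim precise I would argue as follows. Suppose for contradiction that no $J_n$ contains a horseshoe for $g=f^2$. Horseshoe-freeness of $g$ on each nested invariant interval, together with the fixed points we have located, forces $g|_{J_n}$ to be, up to the usual interval-map folklore, ``essentially monotone'' on the relevant subintervals bounded by consecutive $g$-fixed points — in particular the orbit of any compact subinterval under $g$ is eventually trapped in an interval between two consecutive fixed points of $g$ and, having no fixed point in its interior to block collapse, shrinks to a point by Lemma~\ref{lem:interval-no-fixed-point}. So again $|f^k(J')|\to 0$ for every compact $J'\subset J_n$, contradicting the existence of Li-Yorke pairs in $J'$. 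I expect the main obstacle to be precisely this monotonicity-to-collapse step: turning ``no horseshoe for $f^2$'' into ``$f^2$-orbits of subintervals collapse'' cleanly, since $f$ is merely continuous and one must carefully track how the fixed-point structure of $f^2$ on the nested intervals interacts with the common fixed point $z$ and with the boundary, and rule out the intermediate possibility that $g$-images oscillate without either collapsing or forming a horseshoe. Once collapse is established on each $J_n$, the contradiction with dense chaos is immediate, giving a horseshoe for $f^2$ in every $J_n$ and completing the proof.
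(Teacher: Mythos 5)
Your first half is exactly the paper's: negating the hypothesis of Lemma~\ref{lem:dense-chaos-dense-delta-chaos} yields arbitrarily short invariant non degenerate intervals, and Lemma~\ref{lem:nested-Jn-to-0} turns these into a nested sequence shrinking to a fixed point $z$. The problem is the second half. Your central claim --- that horseshoe-freeness of $g=f^2$ on $J_n$ forces the $g$-orbit of every compact subinterval to become trapped between consecutive fixed points of $g$ and hence to collapse via Lemma~\ref{lem:interval-no-fixed-point} --- is not a consequence of ``no horseshoe'' and is false as a general principle: there are interval maps with no horseshoe for any iterate (e.g.\ Sm\'{\i}tal's zero-entropy maps that are chaotic in the sense of Li-Yorke) possessing non degenerate intervals whose images never collapse, and nothing prevents the images of an interval from straddling fixed points of $g$ at every step. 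Likewise your intermediate assertion that a point of $[z,w]$ mapped outside $[z,w]$ is ``exactly what is needed'' for a horseshoe is wrong: such an overflow gives $g(A)\supset A\cup B$ for one of the two pieces adjacent to the overflow point, but not the required covering by the other piece.

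What actually closes the argument is machinery you never invoke, namely Lemma~\ref{lem:alternating} and Lemma~\ref{lem:U<D}, which are stated immediately before the theorem. One picks a point $x_0$ whose $f^p$-orbits never converge for any $p$ (such points exist, since otherwise there would be no Li-Yorke pairs at all), applies Lemma~\ref{lem:alternating} to get its orbit eventually alternating around a fixed point $c\le z$, uses the non-monotonicity of $(x_{N+2n})$ to find a small closed interval $K$ with $K<z$ on which $f^2<\mathrm{id}$, and then --- this is where the nested invariant intervals are really used --- shows via Lemma~\ref{lem:dense-chaos}(iii) that the closure of the orbit of $K$ is an invariant interval meeting every $J_m$, hence meeting the interior of some $J_k$ lying strictly above $K$. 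For $y$ in a suitable subinterval $K'\subset K$ one then has $g(y)<y<g^m(y)$ for all large $m$, and Lemma~\ref{lem:U<D} (the ``no division'' lemma, available precisely because $g$ is assumed to have no horseshoe) forces $(g^m(y))_{m\ge m_0}$ to be non increasing, hence convergent; so $K'\times K'$ contains no Li-Yorke pair, contradicting dense chaos. You correctly located the difficulty, but the route you sketch for resolving it does not go through, and this is the entire content of the theorem beyond the preliminary lemmas.
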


We need two lemmas in the proof of this theorem.
Lemma~\ref{lem:U<D} is proven in \cite{LMPY} under slightly weaker hypotheses,
see also \cite[p 28]{BCop}. Lemma~\ref{lem:alternating} can be found
in \cite[p31]{BCop}.

\begin{lemma}\label{lem:U<D}
Let $f$ be an interval map with no horseshoe and
$x$ a point .  Write $x_n=f^n(x)$ for all $n\geq 0$. Suppose that
$x_{n+1}\geq x_n$ and $x_{m+1}\leq x_m$. Then $x_n\leq x_m$.
\end{lemma}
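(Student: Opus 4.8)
The plan is to argue by contradiction: suppose $x_n>x_m$ while $x_{n+1}\ge x_n$ and $x_{m+1}\le x_m$, and produce a horseshoe for $f$. Set $a=x_m$ and $b=x_n$, so $a<b$, $f(a)=x_{m+1}\le a$ and $f(b)=x_{n+1}\ge b$ (in particular $f$ has a fixed point in $[a,b]$). The only device needed is the elementary observation that whenever $\alpha<\beta<\gamma$ and each of the two intervals $[\alpha,\beta]$ and $[\beta,\gamma]$ contains a point mapped by $f$ to a value $\le\alpha$ as well as a point mapped to a value $\ge\gamma$, then, the image of an interval being an interval, $f([\alpha,\beta])$ and $f([\beta,\gamma])$ both contain $[\alpha,\gamma]\supset[\alpha,\beta]\cup[\beta,\gamma]$, so that $[\alpha,\beta]$ and $[\beta,\gamma]$ form a horseshoe; in our situation the points realising this will be orbit points $x_i$.

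First I would dispose of the trivial case $n=m$ (then $x_{n+1}=x_n$, contradicting $x_n>x_m$) and reduce the case $n>m$ to the case $n<m$ by conjugating $f$ by $t\mapsto -t$, which preserves the absence of a horseshoe and interchanges ascending and descending times while keeping their order. So assume $n<m$ and work along the finite orbit segment $x_n,x_{n+1},\dots,x_{m+1}$, whose minimum $\mu$ satisfies $\mu\le x_{m+1}\le a$ and whose maximum $M$ satisfies $M\ge x_{n+1}\ge b$. Using the last time this segment is at level $\ge b$ I would find an index $i$ with $x_i\ge b$ and $f(x_i)<b$; since $f(b)$ is single-valued this forces $x_i>b$, producing a point $c>b$ with $f(c)<b$. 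In particular $M\ge c>b>a\ge\mu$, so both extrema are attained at interior times of the segment, which yields an orbit point mapped exactly to $\mu$ and one mapped exactly to $M$. Locating these two points, together with $a$ (mapped $\le a$), $b$ (mapped $\ge b$) and $c$ (mapped $<b$), relative to one another, I would pick $\alpha<\beta<\gamma$ as in the preceding paragraph and reach a contradiction with the hypothesis that $f$ has no horseshoe.

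The hard part is this last step of bookkeeping: the orbit may cross the levels $a$ and $b$ many times, so it is not automatic that the two adjacent intervals one writes down each really contain both a far-left and a far-right image point. I expect the clean way to handle this is to choose a counterexample minimising $m-n$; then any excursion of the segment that would spoil the pattern contains a strictly shorter counterexample (to which the induction hypothesis applies), which forces the segment into a configuration where the horseshoe is visible directly. A minor preliminary point, which I would settle first, is that the non-strict inequalities $x_{n+1}=x_n$ and $x_{m+1}=x_m$ collapse some of the intervals above and must be treated separately. Apart from that, every step is an application of the intermediate value theorem together with the covering observation of the first paragraph, in the spirit of \cite{LMPY} and \cite[p.~28]{BCop}.
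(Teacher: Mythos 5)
The paper itself offers no proof of this lemma --- it only points to \cite{LMPY} and \cite[p.~28]{BCop} --- so your attempt has to stand on its own, and as written it does not. Everything up to the last step is correct but easy: the contradiction setup, the trivial case $n=m$, the reflection $t\mapsto -t$ reducing $n>m$ to $n<m$, and the production of a point $c>b$ with $f(c)<b$. The one step that constitutes the actual content of the lemma --- exhibiting the two adjacent intervals --- is replaced by ``I would pick $\alpha<\beta<\gamma$\dots and reach a contradiction'', together with your own admission that the bookkeeping is the hard part and a hope that minimising $m-n$ will rescue it; that induction is never carried out. Moreover the auxiliary points you assemble do not combine into a horseshoe: your $c$ only satisfies $f(c)<b$, which is not low enough to make the left-hand interval cover itself downward, and only one orbit point is known to map to the segment's minimum $\mu$, so it cannot serve as the ``far-left image point'' for both intervals. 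This is a genuine gap.

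The missing idea is to anchor the construction at a fixed point rather than at the orbit's extrema, after which no induction is needed. Assume $n<m$, $a=x_m<b=x_n$, $f(a)\le a$, $f(b)\ge b$, and let $z$ be the largest fixed point of $f$ in $[a,b]$ (one exists by the intermediate value theorem); then $f(y)>y$ for all $y\in(z,b]$, and $z<b$ (if $z=b$ the orbit is constant from time $n$ on, so $x_m=b\ne a$). Let $j\in(n,m]$ be the first time with $x_j\le z$; since $x_{n+1}=f(b)>b>z$ one has $j\ge n+2$, and $c:=x_{j-1}$ satisfies $f(c)\le z<c$, hence $c>b$ because $c$ cannot lie in $(z,b]$. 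Let $i$ be the largest index in $[n,j-2]$ with $x_i<c$ (it exists because $x_n=b<c$); then $d:=x_i\in(z,c)$ and $f(d)=x_{i+1}\ge c$. Now $[z,d]$ and $[d,c]$ form a horseshoe: $f([z,d])\supseteq[z,c]$ because $f(z)=z$ and $f(d)\ge c$, while $f([d,c])\supseteq[z,c]$ because $f(d)\ge c$ and $f(c)\le z$, and $[z,d]\cup[d,c]=[z,c]$. The fixed point $z$ supplies exactly the downward-reaching image that your list of candidate points ($a$, $b$, $c$ and the preimages of $\mu$ and $M$) lacks; with it, the ``last step of bookkeeping'' disappears.
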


\begin{lemma}\label{lem:alternating}
Let $f$ be an interval map such that $f^2$ has no
horseshoe.  Let $x$ be a point which is not ultimately periodic
and write $x_n=f^n(x)$ for $n\geq 0$. Suppose that  there exists
$k_0\geq 2$ such that either $x_{k_0}<x_0<x_1$ or
$x_{k_0}>x_0>x_1$. Then there exist a fixed point $z$ and an integer
$N$ such that, for all $n\geq N$, $x_n>z\Leftrightarrow x_{n+1}<z$.
\end{lemma}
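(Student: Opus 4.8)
The plan is to work entirely with the single orbit $(x_n)$ and to extract from the absence of horseshoes a fixed point about which the orbit must eventually oscillate. First I would reduce to the case where $f$ itself has no horseshoe: if $J,K$ are closed non degenerate intervals with disjoint interiors and $f(J)\cap f(K)\supset J\cup K$, then $f^2(J)\supset f(J)\cup f(K)\supset J\cup K$, and similarly $f^2(K)\supset J\cup K$, so a horseshoe for $f$ would give one for $f^2$; hence our hypothesis forces $f$ to have no horseshoe and Lemma~\ref{lem:U<D} becomes available for $f$. Since $x$ is not ultimately periodic the $x_n$ are pairwise distinct and none of them is a fixed point, so every step is \emph{strictly} increasing or strictly decreasing; call $n$ an up-step if $x_{n+1}>x_n$ and a down-step if $x_{n+1}<x_n$. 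By Lemma~\ref{lem:U<D}, for every up-step $n$ and every down-step $m$ one has $x_n<x_m$; thus, writing $L=\sup\{x_n : n\text{ up-step}\}$ and $R=\inf\{x_m : m\text{ down-step}\}$ (both sets are non empty, since $x_{k_0}<x_0<x_1$ supplies the up-step $0$ and a down-step in $\{1,\dots,k_0-1\}$), we get $L\le R$.

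Passing to the limit along up-steps accumulating at $L$ gives $f(L)\ge L$, and along down-steps accumulating at $R$ gives $f(R)\le R$; since $L\le R$, the intermediate value theorem yields a fixed point $z\in[L,R]$. I would then record the resulting \emph{dichotomy}: if $x_n<z$ then $n$ cannot be a down-step (a down-value is $\ge R\ge z$), so $x_{n+1}>x_n$; symmetrically, if $x_n>z$ then $x_{n+1}<x_n$. In words, below $z$ the orbit strictly increases and above $z$ it strictly decreases, so the orbit splits into increasing runs lying below $z$ and decreasing runs lying above $z$, separated by crossings of $z$. The hypothesis is exactly what prevents the orbit from being eventually monotone: $x_0<x_1$ forces $x_0<z$ (step $0$ is an up-step), and if the orbit stayed below $z$ from time $1$ on it would increase, contradicting $x_{k_0}<x_0$; hence it must rise above $z$ at some time $\le k_0$, and the orbit genuinely meets both sides of $z$.

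It then remains to upgrade ``the orbit oscillates about $z$'' to the required statement that consecutive iterates lie on opposite sides of $z$ for all large $n$, i.e. that the increasing and decreasing runs eventually have length one; equivalently, that from some index $N$ on no two consecutive orbit points lie on the same side of $z$. This is the crux, and it is here that ``no horseshoe for $f^2$'' is needed rather than merely ``no horseshoe for $f$''. I would argue by contradiction: a pair $x_n,x_{n+1}$ on the same side of $z$ (say both $<z$, hence $x_n<x_{n+1}<z$ with two consecutive increasing steps) occurring for arbitrarily large $n$, combined with the excursions above $z$ guaranteed by the previous paragraph, can be organised---by applying Lemma~\ref{lem:U<D} to $g=f^2$ along the even and odd subsequences $x_{2k}$ and $x_{2k+1}$---into two closed intervals whose $f^2$-images each cover their union, producing a horseshoe for $f^2$ and contradicting the hypothesis. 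The monotone-convergence scenario, in which a long increasing run below $z$ coexists with a high excursion above $z$, is precisely the configuration that manufactures such an $f^2$-horseshoe. I expect this comparison of the two subsequences under $f^2$, together with the careful choice of the covering intervals, to be the main technical obstacle.

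Once no two consecutive points share a side of $z$ beyond some $N$, the conclusion is immediate: for every $n\ge N$ exactly one of $x_n,x_{n+1}$ exceeds $z$, that is $x_n>z\Leftrightarrow x_{n+1}<z$. The second case of the hypothesis, $x_{k_0}>x_0>x_1$, is handled by the symmetric argument (reflecting the interval through a point).
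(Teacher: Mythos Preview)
The paper does not give its own proof of this lemma; it simply cites Block--Coppel \cite[p.~31]{BCop}. So there is no in-paper argument to compare your proposal against, and it has to be assessed on its own merits.

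Your outline is sound through the construction of the fixed point $z\in[L,R]$ and the dichotomy (below $z$ each step goes up, above $z$ each step goes down). But two genuine gaps remain. First, you show that the orbit rises above $z$ at least once, using $x_{k_0}<x_0$; you do not exclude the possibility that after finitely many crossings the orbit settles permanently on one side of $z$ and converges monotonically to a fixed point. Such an eventually monotone tail is compatible with ``not ultimately periodic'' and would falsify the conclusion for \emph{every} choice of fixed point, so this scenario must be ruled out before the alternation argument can start; the inequality $x_{k_0}<x_0$ only controls the first $k_0$ steps.

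Second, the decisive step---that a pair $x_n,x_{n+1}$ on the same side of $z$, occurring for arbitrarily large $n$, forces a horseshoe for $f^2$---is stated as an expectation, not an argument. You correctly identify that Lemma~\ref{lem:U<D} applied to $g=f^2$ on the subsequences $(x_{2k})$ and $(x_{2k+1})$ is the right tool, but the actual construction of the two covering intervals is where all the work lies, and nothing in your sketch indicates how to choose them. In the Block--Coppel treatment this is done by first showing (via Lemma~\ref{lem:U<D} for $g$) that each of the two subsequences is eventually monotone, and then using the initial configuration to force them onto opposite sides of a fixed point; the alternation then follows. Your plan is pointed in the right direction, but as written it stops exactly where the proof becomes nontrivial.
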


\begin{proof}[ of Theorem~\ref{theo:structure-dense-chaos}]
By assumption the map $f$ is not generically chaotic thus, by 
Lemma~\ref{lem:dense-chaos-dense-delta-chaos}, 
$$ 
\forall \eps>0,\mbox{ there exists an invariant non degenerate interval } J
\mbox{ such that  }|J| <\eps.
$$ 
Let $(I_n)_{n\geq 0}$ be a sequence
of invariant non degenerate intervals $I_n$ such that $|I_n|\to 0$.
By Lemma~\ref{lem:nested-Jn-to-0} there exists 
a sequence of invariant non
degenerate intervals $(J_n)_{n\geq 0}$ such that 
$\lim_{n\to+\infty}|J_n|=0$, and $J_{n+1}\subset \Int{J_n}$ with respect
to the induced topology on $J_0$. From now on we fix $n_0\geq 0$ and
we restrict to
the interval $J_{n_0}$. The map $f|_{J_{n_0}}$ is densely chaotic,
the set $\bigcap_{n\geq n_0} J_n$ is reduced to a single point $z$ and
$f(z)=z$.
Let 
$$\CP=\{x\in J_{n_0}\mid \exists p\geq 1,\lim_{n\to+\infty}f^{np}(x)
\mbox{ exists}\}.
$$
If $x,y\in \CP$ then $(x,y)$ is not a Li-Yorke pair, thus the set 
$J_{n_0}\setminus \CP$ is not empty. 

Assume that $f^2|_{J_{n_0}}$ 
has no horseshoe; we are going to prove that this is
absurd. Let $x_0\in J_{n_0}\setminus \CP$ and write
$x_n=f^n(x_0)$ for all $n\geq 0$. According to Lemma~\ref{lem:alternating}
there exist a fixed point $c$ and an integer $N$ such that, for all $n\geq
0$, $x_{N+2n}<c<x_{N+2n+1}$. Suppose for instance that $c\leq z$, the
case with reverse inequality being symmetric. Since $x_0\not\in\CP$
the sequence $(x_{N+2n})_{n\geq 0}$ is not ultimately monotone, thus
there exists $i\geq 0$ such that 
$$
x_{N+2i+2}<x_{N+2i}<c\leq z.
$$
By continuity there exists a closed non degenerate interval $K$ containing
$x_{N+2i}$ such that $z\not\in K$ and for all $y\in K$, $f^2(y)<y$.
Let $k\geq n_0$ such that $K<J_k$.

The set $K\times K$ contains a Li-Yorke pair because $f$ is densely
chaotic, thus $\limsup_{n\to+\infty}
|f^n(K)|>0$ and there exist $p,q$ such that $f^{q+p}(K)\cap f^q(K)
\not=\emptyset$. Let $L=\overline{\bigcup_{n\geq q}f^n(K)}$. 
One has $f(L)\subset L$. The same argument as for $\overline{Z}$ in
the proof of Lemma~\ref{lem:dense-chaos-dense-delta-chaos} implies that
$L$ is an invariant non degenerate interval.
Moreover Lemma~\ref{lem:dense-chaos}(iii) implies that
$L\cap J_n\not=\emptyset$ for all $n\geq n_0$. Since $J_{k+1}\subset
\Int{J_k}$, this implies that there exists an integer $n \geq 0$ such that
$f^n(K)\cap \Int{J_k}\not=\emptyset$, thus
there exists a closed non degenerate subinterval $K'\subset K$ such that
$f^n(K')\subset J_k$.

Let $m_0\geq n/2$ and $g=f^2$. For all $y\in K'$ and
all $m\geq m_0$ one has $g^m(y)\in J_k$ because $f(J_k)\subset J_k$,
thus
$$
g(y)<y<g^m(y).
$$
This implies that there exists $0<j<m_0$ such that $g^j(y)<g^{j+1}(y)$.
By assumption $g$ has no horseshoe thus $g^j(y)\leq y$ by
Lemma~\ref{lem:U<D}. For all $m\geq m_0$, one has $y\leq g^m(y)$,
thus the same lemma implies that $g^{m+1}(y)\leq g^m(y)$. Consequently,
$(g^m(y))_{m\geq m_0}$
is a non increasing sequence, thus it converges. But this implies that
$K'\times K'$ contains no Li-Yorke pair, which contradicts the fact that
$f$ is densely chaotic. This concludes the proof.
\end{proof}


Next theorem sums up two results on horseshoes, the first point is due
to Block and Coppel \cite{BCop2}, the second one derives from 
\cite{BGMY} (see also \cite[p 196]{BCop}).

\begin{theorem}\label{theo:horseshoe-entropy-type3}
Let $f$ be an interval map with a horseshoe. Then
\begin{itemize}
\item $f$ is of type $3$ for Sharkovski\u\i's order,
\item $h_{top}(f)\geq\log 2$.
\end{itemize}
\end{theorem}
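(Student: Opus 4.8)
The plan is to prove the two assertions separately; both are classical, so I would only sketch the arguments (the references \cite{BCop2,BGMY,BCop} carry out the details). Throughout, for compact intervals $A,B$ write $A\Rightarrow B$ to mean $f(A)\supseteq B$; the horseshoe hypothesis says precisely that $J\Rightarrow J$, $J\Rightarrow K$, $K\Rightarrow J$ and $K\Rightarrow K$. The only tool needed is the elementary covering lemma: if $f(A)\supseteq B$ for compact intervals then $A$ has a compact subinterval $A'$ with $f(A')=B$; chaining this along a finite word $w_0w_1\cdots w_{m-1}$ in the symbols $\{J,K\}$ with $w_j\Rightarrow w_{j+1}$, and taking a nested intersection, yields a point $x$ with $f^j(x)\in w_j$ for $0\le j\le m-1$, and a periodic point when the word is read cyclically.

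\emph{Type $3$.} I would run the length-$3$ cyclic itinerary $J\Rightarrow K\Rightarrow K\Rightarrow J$. The covering lemma produces $x$ with $f^3(x)=x$, $x\in J$, $f(x)\in K$, $f^2(x)\in K$, so the period of $x$ divides $3$, hence is $1$ or $3$. If it were $1$ then $x=f(x)\in J\cap K$, which by the disjoint-interiors hypothesis is either empty or a single common endpoint: in the first case we are done, and in the second one reduces to it, after a short case analysis, by shrinking $J$ and $K$ to disjoint closed subintervals that still verify the four covering relations. So $f$ has a periodic point of period $3$; since $3$ is the first element of Sharkovski\u\i's order, Theorem~\ref{theo:Sharkovskii} then gives periodic points of every period, i.e. $f$ is of type $3$ (\cite{BCop2}).

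\emph{Entropy.} I would count distinguishable finite orbits directly. For each $n$ and each word $w\in\{J,K\}^n$ the covering lemma gives a point $x_w$ with $f^j(x_w)\in w_j$ for $0\le j\le n-1$; if $w\ne w'$ differ at a position $j$ then $f^j(x_w)$ and $f^j(x_{w'})$ lie one in $J$ and one in $K$, hence are at distance $\ge{\rm dist}(J,K)=:\rho$. Assuming $J,K$ disjoint, $\rho>0$, so $\{x_w\mid w\in\{J,K\}^n\}$ is an $(n,\rho)$-separated set of size $2^n$, and therefore $h_{top}(f)\ge\limsup_n\frac{1}{n}\log 2^n=\log 2$. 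Equivalently, $f$ restricted to $\Lambda=\{x\mid f^k(x)\in J\cup K\text{ for all }k\ge 0\}$ admits the full $2$-shift as a topological factor. The common-endpoint case is again reduced to the disjoint one by the same shrinking of $J,K$; this gives the bound $h_{top}(f)\ge\log 2$ of \cite{BGMY} (see also \cite[p.~196]{BCop}).

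\emph{Main obstacle.} Neither part is deep; the one genuinely delicate point, in both, is the bookkeeping around a possible common endpoint of $J$ and $K$. It is what prevents the three-step covering loop from immediately producing a point of period exactly $3$, and what makes the itinerary map $\Lambda\to\{J,K\}^{\mathbb{N}}$ only piecewise defined. Once the strict part of the disjoint-interiors hypothesis is exploited to pass to disjoint subintervals still satisfying all covering relations, both conclusions are routine — which is why the statement can reasonably be quoted from \cite{BCop2,BGMY,BCop}.
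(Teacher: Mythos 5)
The paper does not actually prove this theorem: it is quoted from \cite{BCop2} (the type-$3$ statement) and from \cite{BGMY} (the entropy bound), so the only question is whether your sketch constitutes a valid proof. It does not, and the failure occurs precisely at the step you yourself single out as ``the one genuinely delicate point''. You propose to handle a common endpoint of $J$ and $K$ by ``shrinking $J$ and $K$ to disjoint closed subintervals that still verify the four covering relations''. This is impossible in general. Take the tent map $T(x)=1-|2x-1|$ on $[0,1]$ with $J=[0,1/2]$, $K=[1/2,1]$: this is a horseshoe in the sense of the paper. Suppose $A,B$ were \emph{disjoint} compact intervals with $A\cup B\subset T(A)\cap T(B)$. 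Since $A\subset T(A)$ and $B\subset T(B)$, each of $A,B$ contains a fixed point of $T$, and the only fixed points are $0$ and $2/3$; say $0\in A=[0,a]$ and $2/3\in B=[b_1,b_2]$ with $b_1>a$. For $0\in T(B)$ one needs $1\in B$, so $b_2=1$; then $B\subset T(B)$ forces $b_1\le 1/2$, while $B\subset T(A)$ forces $1\le 2a$, i.e.\ $a\ge 1/2\ge b_1$, contradicting $b_1>a$. So the tent map admits \emph{no} disjoint horseshoe at all (it is ``turbulent'' but not ``strictly turbulent'' in Block--Coppel's terminology), and your reduction cannot be carried out.

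Without that reduction both halves of your argument are unsupported in the common-endpoint case: the point of period dividing $3$ may collapse onto the fixed common endpoint, and your $(n,\rho)$-separated set has $\rho={\rm dist}(J,K)=0$. Nor can the case be dismissed as peripheral here: the horseshoes arising from Theorem~\ref{theo:structure-dense-chaos} and from Theorem~\ref{theo:transitivity-horseshoe} may well share an endpoint, and the paper's own Example~\ref{ex:equalities} uses the horseshoe $[0,1/4]$, $[1/4,1/2]$ for $g^2$, which does. The actual proofs in the references are genuinely different arguments rather than reductions: Block--Coppel show directly that a turbulent map has periodic points of every period, and the bound $h_{top}(f)\ge\log\rho(A)$ of \cite{BGMY} is stated and proved for intervals with pairwise disjoint (not disjoint) interiors, by a counting argument designed to tolerate shared endpoints. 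Note finally that the reduction which \emph{is} available --- passing to an iterate of $f$ that is strictly turbulent --- only yields $h_{top}(f)\ge\frac{1}{2}\log 2$ by your separated-set count, which falls short of the claimed $\log 2$. So the sketch, as written, does not prove either assertion; quoting the theorem from \cite{BCop2,BGMY,BCop}, as the paper does, is the honest course unless the common-endpoint case is treated by one of these genuinely different arguments.
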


According to Theorem~\ref{theo:Snoha}, a generically chaotic interval map
$f$ admits a transitive subinterval, thus next theorem implies that
$f^2$ has a horseshoe.

\begin{theorem}[Block-Coven \cite{BCov}]\label{theo:transitivity-horseshoe}
Let $f$ be a transitive interval map. Then $f^2$ has a
horseshoe. 
\end{theorem}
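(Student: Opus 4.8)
The natural strategy is to exploit the structure of transitive interval maps. Recall the dichotomy (see \cite{BCop}): a transitive interval map $f\colon I=[a,b]\to I$ is either \emph{totally transitive} --- equivalently topologically exact, i.e.\ for every nondegenerate subinterval $U$ there is an $n$ with $f^n(U)=I$ --- or else there is a fixed point $c\in\Int{I}$ with $f([a,c])=[c,b]$ and $f([c,b])=[a,c]$, and then $f^2|_{[a,c]}$ and $f^2|_{[c,b]}$ are each totally transitive on their own intervals. One should not hope to find a horseshoe for $f$ itself: a transitive map whose topological entropy lies strictly between $\tfrac{\log 2}{2}$ and $\log 2$ has none (a horseshoe forces entropy $\ge\log 2$), so $f^2$ must be the target throughout; and in the swapping case the horseshoe will in general straddle the central fixed point $c$ rather than lie inside one half (again because $f^2$ restricted to a half may have entropy below $\log 2$).

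In the totally transitive case I would proceed as follows. Exactness forbids $f(x)>x$ for all $x\in\Int{I}$, and likewise $f(x)<x$ (in either case every orbit would converge monotonically, so none could be dense), hence $f$ has a fixed point $c\in\Int{I}$; moreover $f^2$ is again transitive, so it has no proper closed invariant subinterval, and therefore $f^2$ crosses the level $c$ from both sides of $c$. Since a transitive interval map has dense periodic points, $f$ has (by Sharkovski\u\i) a period-$2$ orbit $\{e,e'\}$, which we may take to enclose $c$; then $c,e,e'$ are three fixed points of $f^2$, their convex hull $W$ satisfies $f^2(W)\supseteq W$, and inside $W$, using topological exactness of $f^2$, one inflates a small neighbourhood of $c$ until its $f^2$-image is all of $I$ and extracts two closed subintervals $J,K\subseteq W$ with disjoint interiors and $f^2(J)\cap f^2(K)\supseteq J\cup K$. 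The swapping case is dealt with near $c$ directly: there $f$ interchanges small one-sided neighbourhoods of $c$, while the total transitivity of $f^2$ on each half lets a tiny one-sided neighbourhood of $c$ spread, under $f^2$, over the whole of that half; combining the two sides yields an interval through $c$ whose $f^2$-image covers a two-sided neighbourhood of $c$ twice over --- once more a horseshoe for $f^2$.

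The delicate point, where I expect essentially all the work to lie, is the extraction of \emph{two} covering subintervals: having an interval whose iterate is surjective is not enough, since that iterate could be monotone on it, so one must use transitivity to guarantee a genuine double fold, tracking a turning point of $f$ (or of $f$ near $c$, in the swapping case) along the finite chain of images involved. An alternative organisation, using the machinery already available, is by contradiction: if $f^2$ had no horseshoe then $f$ would have none either, so Lemma~\ref{lem:alternating} would apply to a point with dense orbit and force that orbit to alternate strictly about a fixed point $z$ from some time on; the even-indexed part of the orbit would then be dense in one side of $z$, giving a transitive horseshoe-free interval map on a proper subinterval with a fixed endpoint; since a swapping-type map cannot have a fixed endpoint, that map would be totally transitive, and one would be reduced to the totally transitive case above. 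Either way, the combinatorics near the fixed point is the crux.
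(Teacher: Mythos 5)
First, a point of comparison: the paper does not prove this statement at all --- it is quoted from Block--Coven \cite{BCov} --- so any correct argument here is ``new'' relative to the paper. Your proposal, however, is a plan rather than a proof, and the plan has a genuine gap exactly where you locate it. The general architecture (the dichotomy totally transitive/exact versus swapping about an interior fixed point $c$, and the decision to aim at $f^2$ rather than $f$) is sound, but the step that actually produces the horseshoe is both deferred and misdescribed. ``Inflating a small neighbourhood of $c$ until its $f^2$-image is all of $I$'' cannot work as stated: exactness gives $f^{2n}(U)=I$ for some $n$ depending on $U$, not $f^{2}(U)=I$, and there need be no neighbourhood of $c$ whose image under the \emph{single} map $f^2$ is all of $I$ (near $c$ the map $f^2$ may be a local homeomorphism). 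The same confusion occurs in the swapping case (``spread, under $f^2$, over the whole of that half''). Since you yourself say that ``essentially all the work'' lies in extracting the two covering intervals, and that extraction is never carried out, the main route is incomplete.

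The fallback ``alternative organisation'' has a concrete logical flaw as well: after applying Lemma~\ref{lem:alternating} to a dense orbit you obtain that $g=f^2|_{[a,z]}$ is transitive with the fixed point $z$ as an \emph{endpoint}, and you then ``reduce to the totally transitive case above''. But that case, as you set it up, only yields a horseshoe for the \emph{square} of the totally transitive map --- here $g^2=f^4$ --- which is not the assertion. What saves this branch is a sharper (and easily proved) fact that you do not state: \emph{a transitive interval map $g$ with a fixed endpoint has a horseshoe for $g$ itself}. Indeed, such a $g$ is exact, so $g^{-n}(z)$ is dense and one finds $w<z$ with $g(w)=z$; if $\min g([w,z])>w$ then $[w,z]$ would be a proper invariant subinterval, contradicting transitivity, so $\min g([w,z])\leq w$ is attained at some $v\in(w,z)$, and $J=[w,v]$, $K=[v,z]$ satisfy $g(J)\cap g(K)\supset[w,z]\supset J\cup K$. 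With this lemma the contradiction route closes (and a variant of the same three-point argument, run between $c$ and a nearest $f^2$-preimage of $c$, is what is needed to finish your totally transitive case). Without it, neither of your two organisations reaches the conclusion.
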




\begin{corollary}\label{cor:dense-chaos-entropy-type}
Let $f$ be a densely chaotic interval map. Then $f^2$ has a horseshoe,
$h_{top}(f)\geq \frac{\log 2}{2}$ and $f$ is of type at most $6$ for 
Sharkovski\u\i's order.
\end{corollary}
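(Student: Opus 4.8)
The plan is to first prove that $f^2$ always has a horseshoe, splitting into two cases according to whether or not $f$ is generically chaotic. If $f$ is generically chaotic, then Theorem~\ref{theo:Snoha} provides a transitive non degenerate subinterval $T$, so $f|_T$ is a transitive interval map; Theorem~\ref{theo:transitivity-horseshoe} then yields a horseshoe for $(f|_T)^2=f^2|_T$, and since $T$ is invariant this is in particular a horseshoe for $f^2$ on all of $I$. If instead $f$ is densely but not generically chaotic, then Theorem~\ref{theo:structure-dense-chaos} directly provides a descending sequence of invariant non degenerate intervals $(J_n)_{n\geq 0}$ such that $f^2|_{J_n}$ has a horseshoe for every $n$; taking $n=0$ shows that $f^2$ has a horseshoe. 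In both cases $f^2$ has a horseshoe, which is the first assertion of the corollary.

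Next I would set $g=f^2$ and apply Theorem~\ref{theo:horseshoe-entropy-type3} to $g$: since $g$ has a horseshoe, it is of type $3$ for Sharkovski\u\i's order and $h_{top}(g)\geq\log 2$. For the entropy bound I use the standard identity $h_{top}(f^2)=2\,h_{top}(f)$ valid for a continuous self-map of a compact metric space, which gives $h_{top}(f)=\frac{1}{2}h_{top}(f^2)\geq\frac{\log 2}{2}$.

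For the type of $f$, I argue as follows. Since $g=f^2$ is of type $3$, it has a periodic point $x$ of period $3$, i.e.\ $f^6(x)=x$ while $f^2(x)\neq x$ and $f^4(x)\neq x$. The period of $x$ under $f$ divides $6$ and cannot be $1$ or $2$ (otherwise $f^2(x)=x$), hence it equals $3$ or $6$. If it equals $3$, then $f$ has a periodic point of period $3$ and, since $3\lhd 6$, Theorem~\ref{theo:Sharkovskii} produces a periodic point of period $6$. In either case $f$ has a periodic point of period $6$, so $f$ is of type at most $6$ for Sharkovski\u\i's order.

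This corollary is essentially a bookkeeping exercise once the structure theorem and the cited results on horseshoes are available; the only points requiring a little care are the correct combination of the generic and non-generic cases, the elementary arithmetic relating the $f$-period and the $f^2$-period of the periodic point, and the invocation of $h_{top}(f^2)=2\,h_{top}(f)$. I do not expect any genuine obstacle.
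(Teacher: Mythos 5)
Your proposal is correct and follows essentially the same route as the paper: the same case split (generically chaotic via Theorem~\ref{theo:Snoha} and Theorem~\ref{theo:transitivity-horseshoe}, otherwise via Theorem~\ref{theo:structure-dense-chaos}), then Theorem~\ref{theo:horseshoe-entropy-type3} applied to $f^2$ together with $h_{top}(f^2)=2\,h_{top}(f)$ and the period-$3$-or-$6$ argument. You merely spell out a couple of steps (the entropy identity and the final Sharkovski\u\i{} step from period $3$ to period $6$) that the paper leaves implicit.
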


\begin{proof}
If $f$ is generically chaotic then $f^2$ has a horseshoe by
Theorems \ref{theo:Snoha} and \ref{theo:transitivity-horseshoe}, otherwise
$f^2$ has a horseshoe by Theorem~\ref{theo:structure-dense-chaos}.
By Theorem~\ref{theo:horseshoe-entropy-type3},
$h_{top}(f)\geq \frac{\log 2}{2}$ and $f^2$ has a periodic point of period
$3$, thus $f$ has a periodic point of period $3$ or $6$.
\end{proof}

\begin{example}\label{ex:equalities}
In Corollary~\ref{cor:dense-chaos-entropy-type} equalities are possible.
Consider the ``square-root'' of the tent map, pictured on
Figure~\ref{fig:htop-transitive}. 
The map $g$ swaps the intervals $[0,1/2]$ and $[1/2,1]$ 
thus every periodic point $x\not=1/2$ has an even period.
Moreover the intervals $[0,1/4], [1/4,1/2]$ form a horseshoe for $g^2$, so
Theorem~\ref{theo:horseshoe-entropy-type3} implies that
$g$ is of type $6$ for Sharkovski\u\i's order. 
The map $g^2$ restricted to either $[0,1/2]$ or $[1/2,1]$ is the classical
tent map (upside down on $[0,1/2]$), which is known to be mixing
(see, e.g., \cite[p 159]{BCop}), so $g$ is transitive (thus
generically chaotic).
Finally the topological entropy of $g^2$ is equal to $\log 2$
(use either the fact that it is Markov or the combination of
Theorem~\ref{theo:horseshoe-entropy-type3} and 
\cite[Proposition~(14.20)]{DGS}), hence $h_{top}(g)=\frac{\log 2}{2}$.

\begin{figure}[htb]
\centerline{\includegraphics{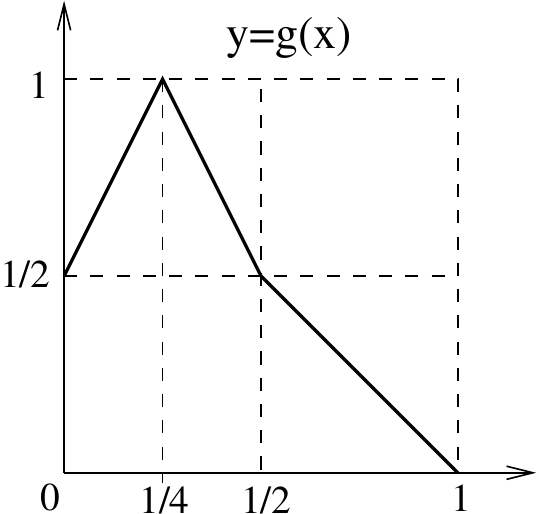}}
\caption{Densely chaotic map of entropy $\frac{\log 2}{2}$ and type
$6$.
\label{fig:htop-transitive}}
\end{figure}
\end{example}

This example shows that the infimum of the topological entropy of
densely (respectively generically) chaotic interval maps is reached 
and is equal to $\frac{\log 2}{2}$.

There also exist transitive (thus generically chaotic) interval maps
of type $2k+1$ for all $k\geq 1$ \cite{BCov}. It derives from
\cite{BGMY} that the topological entropy of a map of type $2k+1$ is greater
than $\frac{\log 2}{2}$.

\section*{References}
\providecommand{\MR}{\relax\ifhmode\unskip\space\fi MR }

\noindent
Laboratoire de Math\'ematiques -- Topologie et Dynamique -- B\^atiment 425 -- Universit\'e Paris-Sud~-- 91405 Orsay cedex -- France\\
{\it e-mail :} {\tt  sylvie.ruette@math.u-psud.fr}

\begin{thebibliography}{10}

\bibitem{BCov}
L.~Block and E.~M. Coven, \emph{Topological conjugacy and transitivity for a
  class of piecewise monotone maps of the interval}, Trans. Amer. Math. Soc.
  \textbf{300} (1987), no.~1, 297--306. \MR{88c:58032}

\bibitem{BGMY}
L.~Block, J.~Guckenheimer, M.~Misiurewicz, and L.~S. Young, \emph{Periodic
  points and topological entropy of one dimensional maps}, Global Theory of
  Dynamical Systems, Lecture Notes in Mathematics, no. 819, Springer-Verlag,
  1980, pp.~18--34.

\bibitem{BCop2}
L.~S. Block and W.~A. Coppel, \emph{Stratification of continuous maps of an
  interval}, Trans. Amer. Math. Soc. \textbf{297} (1986), no.~2, 587--604.
  \MR{88a:58164}

\bibitem{BCop}
L.~S. Block and W.~A. Coppel, \emph{Dynamics in one dimension}, Lecture Notes in Mathematics, no.
  1513, Springer-Verlag, 1992.

\bibitem{BP}
G. J. Butler and G. Pianigiani, \emph{Periodic points and chaotic functions in the unit interval}, Bull. Austral. Math. Soc. \textbf{18} (1978),
no.~2, 255--265, MR{58 \#13203}

\bibitem{DGS}
M.~Denker, C.~Grillenberger, and K.~Sigmund, \emph{Ergodic theory on compact
  spaces}, Lecture Notes in Mathematics, no. 527, Springer-Verlag, 1976.

\bibitem{HY}
W.~Huang and X.~Ye, \emph{Devaney's chaos or 2-scattering implies
  {L}i-{Y}orke's chaos}, Preprint.

\bibitem{KuS}
M.~Kuchta and J.~Sm{\'\i}tal, \emph{Two-point scrambled set implies chaos},
  European Conference on Iteration Theory (Caldes de Malavella, 1987), World
  Sci. Publishing, Teaneck, NJ, 1989, pp.~427--430. \MR{91j:58112}

\bibitem{LMPY}
T.~Y. Li, M.~Misiurewicz, G.~Pianigiani, and J.~A. Yorke, \emph{No division
  implies chaos}, Trans. Amer. Math. Soc. \textbf{273} (1982), no.~1, 191--199.
  \MR{83i:28024}

\bibitem{LY}
T.~Y. Li and J.~A. Yorke, \emph{Period three implies chaos}, Amer. Math.
  Monthly \textbf{82} (1975), no.~10, 985--992. \MR{52 \#5898}

\bibitem{Mur}
E.~Murinov\'a, \emph{Generic chaos in metric spaces}, Acta Univ. M. Belii Math.
  \textbf{8} (2000), 43--50.

\bibitem{Oxt}
J.~Oxtoby, \emph{Measure and category}, Springer-Verlag, 1980.

\bibitem{Pio}
J.~Pi{\'o}rek, \emph{On the generic chaos in dynamical systems}, Univ. Iagel.
  Acta Math. \textbf{25} (1985), 293--298. \MR{87h:58131}


\bibitem{Sha}
A.~N. Sharkovski\u\i, \emph{Co-existence of cycles of a continuous
  mapping of the line into itself ({R}ussian)}, Ukrain. Mat. \u Z. 
\textbf{16} (1964),  61--71. \MR{28 \#3121}. English translation:
Internat. J. Bifur. Chaos Appl. Sci. Engrg. \textbf{5}  (1995),  no.~5, 
1263--1273.

\bibitem{Smi}
J.~Sm{\'\i}tal, \emph{Chaotic functions with zero topological entropy}, Trans.
  Amer. Math. Soc. \textbf{297} (1986), no.~1, 269--282. \MR{87m:58107}

\bibitem{Sno}
{L$\!$'}. Snoha, \emph{Generic chaos}, Comment. Math. Univ. Carolin.
  \textbf{31} (1990), no.~4, 793--810. \MR{92b:58149}

\bibitem{Sno2}
{L$\!$'}. Snoha, \emph{Dense chaos}, Comment. Math. Univ. Carolin. \textbf{33} (1992),
  no.~4, 747--752. \MR{94i:58128}

\bibitem{Ste}
P. {\v{S}}tefan, \emph{A theorem of \v{S}arkovskii on the existence of periodic
orbits of continuous endomorphisms of the real line},
Comm. Math. Phys. \textbf{54} (1977), no.~3, 237--248.
\MR{56 \#3894}


\end{thebibliography}
\end{document}